 \tikzset{
  symbol/.style={
    draw=none,
    every to/.append style={
      edge node={node [sloped, allow upside down, auto=false]{$#1$}}}
      }
      }
	\theoremstyle{definition} 
	\newtheorem{definition}{Definition}[section]
	\theoremstyle{plain} 
	\newtheorem{theorem}[definition]{Theorem}
	\newtheorem*{theorem*}{Theorem} 
	\newtheorem{lemma}[definition]{Lemma}
	\newtheorem{proposition}[definition]{Proposition}
	\theoremstyle{remark} 
	\newtheorem{remark}[definition]{Remark}
\renewcommand{\AA}{\mathbb A}
\newcommand{\CC}{\mathbb C}
\newcommand{\FF}{{\mathbb F}}
\newcommand{\GG}{{\mathbb G}}
\newcommand{\PP}{{\mathbb P}}
\newcommand{\QQ}{{\mathbb Q}}
\newcommand{\RR}{{\mathbb R}}
\newcommand{\TT}{{\mathbb T}}
\newcommand{\ZZ}{{\mathbb Z}}
\newcommand{\cC}{{\mathcal C}}
\newcommand{\cD}{{\mathcal D}}
\newcommand{\cE}{{\mathcal E}}
\newcommand{\cF}{{\mathcal F}}
\newcommand{\cH}{{\mathcal H}}
\newcommand{\cL}{{\mathcal L}}
\newcommand{\cM}{{\mathcal M}}
\newcommand{\cO}{{\mathcal O}}
\newcommand{\cP}{{\mathcal P}}
\newcommand{\cS}{{\mathcal S}}
\newcommand{\cT}{{\mathcal T}}
\newcommand{\cU}{{\mathcal U}}
\newcommand{\cV}{{\mathcal V}}
\newcommand{\cW}{{\mathcal W}}
\newcommand{\cX}{{\mathcal X}}
\newcommand{\cY}{{\mathcal Y}}
\newcommand{\cZ}{{\mathcal Z}}
\newcommand{\frakI}{\mathfrak I}
\newcommand{\frakS}{\mathfrak S}
\newcommand{\frakT}{\mathfrak T}
\newcommand{\frakV}{\mathfrak V}
\newcommand{\frakW}{\mathfrak W}
\newcommand{\frakX}{\mathfrak X}
\newcommand{\frakc}{\mathfrak c}
\newcommand{\frakd}{\mathfrak d}
\newcommand{\frakl}{\mathfrak l}
\newcommand{\frakn}{\mathfrak n}
\newcommand{\frako}{\mathfrak o}
\newcommand{\frakp}{\mathfrak p}
\newcommand{\frakq}{\mathfrak q}
\newcommand{\sW}{\mathscr W}
\newcommand{\Qbar}{\overline{\QQ}}
\newcommand{\Qpbar}{\Qbar_p}
\newcommand{\HT}{\mathrm{HT}}
\newcommand{\dR}{\mathrm{dR}}
\newcommand{\defeq}{\colonequals} 
\newcommand{\isom}{\cong} 
\DeclareMathOperator\Hom{Hom} 
\DeclareMathOperator\End{End} 
\DeclareMathOperator\GL{GL} 
\DeclareMathOperator\Spf{Spf} 
\DeclareMathOperator\Spa{Spa} 
\newcommand{\Gm}{\mathbb{G}_m} 
\newcommand{\tor}{\mathrm{tor}} 
\DeclareMathOperator\Res{Res} 
\newcommand{\id}{\mathrm{id}} 
\DeclareMathOperator\val{val} 
\DeclareMathOperator\Frac{Frac} 
\DeclareMathOperator\Tr{Tr} 
\DeclareMathOperator\Nm{Nm} 
\let\det\relax
\DeclareMathOperator{\det}{det} 
\newcommand{\an}{\mathrm{an}}
\newcommand{\fs}{\mathrm{fs}}
\DeclareMathOperator{\Hdg}{Hdg}
\DeclareMathOperator{\Fitt}{Fitt}
\DeclareMathOperator{\Isom}{Isom}
\newcommand{\RG}{\mathrm{R\Gamma}}
\DeclareMathOperator\rH{H}
\newcommand{\Cl}{\mathscr{C}\!\ell_F^+}
\begin{document}
\title{Eigenvariety for partially classical Hilbert modular forms}
\author{Mladen Dimitrov}
\address{Laboratoire Paul Painlev\'e, Universit\'e de Lille, 59000 Lille, France}
\email{mladen.dimitrov@univ-lille.fr}

\author{Chi-Yun Hsu}
\address{Department of Mathematics and Computer Science, Santa Clara University, Santa Clara, CA 95053, USA}
\email{chsu6@scu.edu}

\date{\today}

\dedicatory{Dedicated to the memory of Jo\"{e}l Bella\"{i}che}   


\begin{abstract}
For each subset of primes in a totally real field  above  a rational prime  $p$, there is the notion of partially classical Hilbert modular forms, where the empty set  recovers the overconvergent forms and the full set of primes above $p$ yields classical forms.
Given such a set, we $p$-adically interpolate the classical modular sheaves to construct families of partially classical Hilbert modular forms with weights varying in appropriate weight spaces and construct the corresponding eigenvariety, generalizing the construction of Andreatta, Iovita, Pilloni, and Stevens. 
\end{abstract}

\maketitle

\addtocontents{toc}{\setcounter{tocdepth}{0}}

\section*{Introduction}
Coleman and Mazur \cite{CM98} constructed the eigencurve, a rigid analytic curve over $\QQ_p$  parametrizing finite-slope Hecke eigensystems  appearing in the space of overconvergent elliptic modular forms of tame level one. It is endowed with a natural map to  a one dimensional weight space parametrizing continuous characters on $\ZZ_p^\times$. 
 
A general method for constructing eigenvarieties, known as the \emph{eigenvariety machine}, was formulated by Buzzard \cite{Buz07}.
Roughly speaking, given an orthonormalizable Banach module $\cM$ over an admissibly covered rigid analytic space $\cW$ and a commutative algebra $\cH$ acting on $\cM$ with a distinguished compact operator $U\in \cH$,  the eigenvariety machine returns an equidimensional eigenvariety $\cE$ with a weight map $\cE\rightarrow \cW$.
The Banach modules, whose importance goes beyond their auxiliary role in the construction of $\cE$ as highlighted by Bella\"{i}che \cite{bellaiche-TJM},  come in  different flavors. 
\begin{enumerate}
 \item {\bf Coherent cohomology of degree zero}.   The theory of canonical subgroups allows one to $p$-adically interpolate classical modular sheaves and then take global sections. This was done by Andreatta, Iovita, Pilloni, and Stevens in the case of Siegel modular forms \cite{AIP_Siegel} and Hilbert modular forms \cite{AIP16,AIS}, by Brasca \cite{Brasca} in the case of PEL Shimura varieties with dense ordinary locus, and by Hernandez \cite{Hernandez} for Picard modular forms. The key technical ingredient is the vanishing of higher cohomologies coming from the affineness of the ordinary locus, which implies that the relevant cohomology is concentrated in the $\rH^0$. 
 For Hilbert modular forms, there is also a second method of $p$-adically interpolating the classical modular sheaves, using the Hilbert modular varieties at infinite level and the Hodge--Tate period map \cite{BHW}. 
 \item {\bf Betti cohomology of distribution sheaves}.  The spreading of Betti cohomology over multiple degrees 
 requires a mechanism beyond Buzzard's eigenvariety machine. This is done for reductive groups whose real points admit discrete series by Urban \cite{Urb} by interpolating the alternating trace on the total cohomology, or by 
 Hansen \cite{Hansen} where the resulting eigenvariety might not be equidimensional. 
 In the case of Hilbert modular forms, an equidimensional eigenvariety using the middle degree cohomology was constructed  by Barrera--Dimitrov--Jorza \cite{BDJ} and by Bergdall--Hansen \cite{BH_Hilb}, 
 both  based on slight variations of Stevens' overconvergent cohomology.  
\end{enumerate}

The eigenvarieties mentioned so far, aside from \cite{BDJ}, focus on the $p$-adic interpolation of classical automorphic forms over a weight space of maximal dimension, where the distinguished operator $U$ is the Hecke operator with respect to the Iwahori subgroup at $p$. 
More generally, one would be interested in the $p$-adic interpolation of classical automorphic forms with respect to other parahoric subgroups, which can be motivated by constructing families of forms having infinite slope at some of the places above $p$. 
This was first investigated by Loeffler \cite{Loeff_oc} in the situation of reductive groups whose real points are compact modulo center. In this situation the associated modular variety is zero-dimensional, so the theory has no geometric input. The case of Hilbert modular forms was worked out in \cite{BDJ}. Further research 
for representations of  $\GL(2n)$ admitting a Shalika model was taken up by Barrera--Dimitrov--Williams, where one considers the parahoric subgroup corresponding to the maximal $(n,n)$-parabolic.  
More generally, Barrera--Williams \cite{BS-W} constructed parabolic eigenvarieties via overconvergent cohomology. 

In this paper, we are interested in the construction of partial Hilbert eigenvarieties using coherent cohomologies. 
We $p$-adically interpolate the classical modular sheaves, following the approach of Andreatta--Iovita--Pilloni.
Because the partial ordinary locus is not affine, the $\rH^0$ cannot be proven to be orthonormalizable.
This prevents one from applying Buzzard's eigenvariety machine to construct the partial eigenvariety.

To address this issue, we propose an alternative path inspired by the Ash--Stevens method.
We consider the total cohomology complex of the modular sheaf. 
There is a natural admissible covering of the partial ordinary locus by affinoids, obtained by pulling back a natural covering of the flag varieties via the Hodge--Tate period map,  which is stable by the relevant Hecke operators. 
The \v{C}ech complex associated to the admissible covering consists of projective modules endowed with a compact operator and is quasi-isomorphic to the total cohomology complex. The Ash--Stevens method, as adapted by Boxer--Pilloni \cite{BP}, allows one to
deduce that the total cohomology admits a slope decomposition and to  construct the corresponding eigenvariety. 

While this works sets the stage for studying the $p$-adic variations of partially classical Hilbert modular forms, including the fascinating  case of partial weight $1$, it also leaves many natural questions unanswered. For example, it 
remains unclear whether every partially classical form appears in the partial eigenvariety, namely whether it deforms in a partial family. We hope to study this and related questions in future work.

\tableofcontents

\subsection*{Notation}
Let  $F$ be a  totally real  number field of degree $g$, with ring of integers $\frako$ and different $\frakd$, and  
fix an odd  prime number $p$ which is unramified in $F$. 
Let $\Sigma$ denote the set of Archimedean embeddings of $F$ which we regard as $p$-adic embeddings via a fixed isomorphism $\iota_p\colon\CC\xrightarrow{\sim}\Qpbar$. 
Letting $\Sigma_p$ denote the set of primes $\frakp$ of $F$ above $p$, there is a partition 
$\Sigma=\coprod_{\frakp\in \Sigma_p}\Sigma_{\frakp}$ where $\Sigma_{\frakp}$ is the subset of $p$-adic embeddings inducing $\frakp$. Clearly $ f_\frakp=|\Sigma_\frakp|$ is the residual degree of $\frakp$.
More generally, for any $I\subseteq \Sigma_p$ we let $\Sigma_I = \bigcup_{\frakp\in I} \Sigma_\frakp$. Finally, for $\underline{n} = (n_\frakp) \in \ZZ_{\geqslant 0}^{\Sigma_p}$, we  
denote by $p^{\underline{n}}$ the ideal $\prod_{\frakp\in \Sigma_p} \frakp^{n_\frakp}$.

 \addtocontents{toc}{\setcounter{tocdepth}{2}}

\section{Partial canonical subgroups for Hilbert modular varieties}

\subsection{Hilbert modular varieties} \label{HMV} \

Let $G = \Res_{F/\QQ}\GL_2$ and $G^\ast\defeq G\times_{\Res_{F/\QQ}\Gm}\Gm$.
We first recall the Shimura variety attached to $G^\ast$ which is of PEL type and has a nice moduli interpretation. 
We then explicitly describe the Shimura variety attached to $G$ as the quotient of the Shimura variety of $G^\ast$ by a finite group. 

Let $\frakn\subseteq \frako$ be a non-zero ideal.
Let $\mu_\frakn$ be the Cartier dual of 
$\frako/\frakn$ seen as a finite flat group scheme over $\ZZ$. Let $\frakc$ be a fractional ideal of $F$, and $\frakc^+\subseteq\frakc$ be the cone of totally positive elements, i.e., the elements in $\frakc$ which are positive under every embedding $\tau\colon F\rightarrow \RR$.
When $\frakn$ is sufficiently small, there exists a smooth quasi-projective Hilbert modular $\QQ_p$-scheme $X_\frakc$ classifying $\underline{A} = (A,i,\lambda,\alpha)$ 
over a $\QQ_p$-scheme $S$, where: 
\begin{itemize}
    \item $A$ is an abelian scheme over $S$ of relative dimension $g$.
    \item $i\colon \frako \hookrightarrow \End_S(A)$ is a ring homomorphism.
    \item $\lambda\colon (\cP_A, \cP_A^+) \rightarrow (\frakc,\frakc^+)$ is  a $\frakc$-polarization, i.e.,  an isomorphism of $\frako$-modules preserving positivity, and inducing an isomorphism $A\otimes_{\frako} \frakc \isom A^\vee$.
    Here $\cP_A$ is the projective rank $1$ $\frako$-module  consisting of symmetric morphisms from $A$ to its dual abelian scheme $A^\vee$, and $\cP_A^+\subseteq \cP_A$ is the cone of polarizations.
    \item $\alpha\colon \mu_\frakn  \hookrightarrow A$ is a closed immersion of group schemes compatible with the $\frako$-actions,  called a $\Gamma_1^1(\frakn)$-level structure. 
\end{itemize} 
The modular variety $X_{\frakd^{-1}}$ is the Shimura variety attached to $G^\ast$.
Henceforth we  assume that $p$ is relatively prime to $\frakn$.
Then there exists an integral model of $X_\frakc$ over $\ZZ_p$, which we continue to denote by $X_\frakc$.
The group  $\frako_{+}^\times$ of totally positive units of $\frako$ acts  on $X_\frakc$ as follows: given $\epsilon\in\frako_{+}^\times$ and an $S$-point $(A/S, i, \lambda, \alpha)$ of $X_\frakc$, we have
\[
 \epsilon\cdot (A/S,i,\lambda,\alpha) = (A/S, i, i(\epsilon)\circ\lambda,\alpha).
\]
This action  factors through the finite quotient $\Delta_\frakn\defeq \frako_{+}^\times/E_\frakn^2$, where $E_\frakn\defeq\{\eta\in\frako^\times\colon\eta\equiv 1\bmod\frakn\}$.
Indeed, if $\epsilon = \eta^2$ with $\eta \in E_\frakn$, then
\[
\epsilon\cdot (A/S,i,\lambda,\alpha)
=(A/S,i,i(\eta^2)\circ\lambda,\eta\alpha)
=\eta^\ast(A/S,i,\lambda,\alpha)
\isom (A/S,i,\lambda,\alpha).
\]

Let $\displaystyle   X\defeq\coprod_{\frakc} X_{\frakc}$,
where $\frakc$ runs over a fixed set of representatives of the narrow class group $\Cl$ of $F$. The quotient of $X$ by $\Delta_\frakn$ is an integral model for the Shimura variety attached to $G$ and its cohomology enjoys a natural action of the  Hecke algebra for $G$. 

Henceforth,  we extend the scalars from $\ZZ_p$ to the ring of integers $\cO$ of sufficiently large finite extension $L$ of $\QQ_p$, in particular containing the images of all $p$-adic embeddings of $F$. 

Denote by  $\frakX$  the completion of $X$ along its special fiber  $\overline{X}$, and by $\cX$ the rigid generic fiber of the formal scheme $\frakX$.
We also use this convention of letter styles for other schemes: when $S$ is a scheme over $\cO$, we denote by $\overline{S}$ its special fiber, by $\frakS$ the associated formal scheme, by $\cS$ the rigid generic fiber of $\frakS$, and by $\cS^\an\supset \cS$ the rigid analytic space associated to the generic fiber $S_L$.
Let $X^\tor$ be a toroidal compactification of $X$, and $X^\ast$ its minimal compactification.
We similarly have $\overline{X}^\tor, \frakX^\tor, \cX^\tor$ and $\overline{X}^\ast, \frakX^\ast, \cX^\ast$.

For $\underline{n}  \in \ZZ_{\geqslant 0}^{\Sigma_p}$, we  let $X_0(\frakc,p^{\underline{n}}) \rightarrow X_{\frakc}$ be the Hilbert modular scheme over $L$ classifying $(\underline{A}, (H_{\frakp})_{\frakp})/S$, where $\underline{A}\in X_\frakc(S)$ and for all $\frakp\in \Sigma_p$, $H_{\frakp}\subseteq A[\frakp^{n_\frakp}]$ is an isotropic $\frako$-invariant finite flat subgroup scheme of rank $p^{f_\frakp  n_\frakp}$.
If $n_\frakp \leqslant 1$ for all $\frakp\in \Sigma_p$, then there exists an integral model of $X_0(\frakc,p^{\underline{n}})$ over $\cO$ (\cite[\S3]{Diamond}), which we continue to denote by the same symbol.
Let $X_1(\frakc,p^{\underline{n}})\rightarrow X_{\frakc}$  be the Hilbert modular scheme over $L$ of level $\Gamma_1^1(\frakn p^{\underline{n}})$. 
We have a natural map $\pi\colon X_1(\frakc,p^{\underline{n}}) \rightarrow X_0(\frakc,p^{\underline{n}})$, which is a $(\frako/p^{\underline{n}})^\times$-torsor.
Given an $L$-valued character $\chi$ of $(\frako/p^{\underline{n}})^\times$, we define 
\begin{equation}\label{eq:twist}
\cO_{X_0(\frakc,p^{\underline{n}})}(\chi) = \left(\pi_\ast \cO_{X_1(\frakc,p^{\underline{n}})}\right)[\chi]
\end{equation}
to be the subsheaf of $\pi_\ast \cO_{X_1(\frakc,p^{\underline{n}})}$ on which $(\frako/p^{\underline{n}})^\times$ acts by $\chi$.
Extending the map $\pi$ to  suitable toroidal compactifications $X_1^\tor(\frakc,p^{\underline{n}})\to X_0^\tor(\frakc,p^{\underline{n}})$  allows one to also extend  $\cO_{X_0(\frakc,p^{\underline{n}})}(\chi)$.

Define 
\[
X_0^\tor(p^{\underline{n}}) \defeq \coprod_{\frakc} X_0^\tor(\frakc,p^{\underline{n}}) \text{ and }X_1^\tor(p^{\underline{n}}) \defeq \coprod_{\frakc} X_1^\tor(\frakc,p^{\underline{n}}),
\]
where $\frakc$ runs over  a set of representatives of $\Cl$.
We have the rigid analytic space $\cX_0(p^{\underline{n}})^\an$, $\cX_1(p^{\underline{n}})^\an$, $\cX_0^\tor(p^{\underline{n}})^\an$ and  $\cX_1^\tor(p^{\underline{n}})^\an$ associated to $X_0(p^{\underline{n}})$, $X_1(p^{\underline{n}})$, 
$X_0^\tor(p^{\underline{n}})$ and  $X_1^\tor(p^{\underline{n}})$. Again, when $\underline{n}\leqslant \underline{1}$, 
we also have $\cX_0(p^{\underline{n}})\subseteq\cX_0(p^{\underline{n}})^\an$ and $\cX_0^\tor(p^{\underline{n}}) = \cX_0^\tor(p^{\underline{n}})^\an$, which are respectively the rigid generic fibers of the formal completions of $X_0(p^{\underline{n}})$ and $X_0^\tor(p^{\underline{n}})$  along their special fibers.

Let $\omega_{\widetilde{A}_\frakc}$ be the sheaf of relative invariant differentials of the semi-abelian scheme $\widetilde{A}_\frakc$ over $X_0^\tor(\frakc,p^{\underline{n}})$.
As $p$ is unramified in $F$, the sheaf $\omega_{\widetilde{A}_\frakc}$ is locally free of rank $1$ as an $\frako\otimes \cO_{X_0^\tor(\frakc,p^{\underline{n}})}$-module.
The $\frako$-module structure on $\omega_{\widetilde{A}_\frakc}$ provides a direct sum decomposition
\[\omega_{\widetilde{A}_\frakc} = \bigoplus_{\tau \in \Sigma} \omega_{\frakc,\tau}, \]
where each $\omega_{\frakc,\tau}$ is locally free of rank $1$ as an $\cO_{X_0^\tor(\frakc,p^{\underline{n}})}$-module.

The natural map from the set $\ZZ^\Sigma\times \ZZ$ of weights of $G$ to the set $\ZZ^\Sigma$ of weights of $G^\ast$ sends $(\underline{\nu},w)$ to $\underline{k}$ where $k_\tau=2\nu_\tau+w$.
Given $\underline{k}\in \ZZ^{\Sigma}$, we define, on $X_0^\tor(\frakc,p^{\underline{n}})$, an invertible sheaf 
\[\omega_\frakc^{\underline{k}} = \bigotimes_{\tau\in \Sigma}  \omega_{\frakc,\tau}^{k_{\tau}}.\]
Given an $L$-valued character $\chi$ of $(\frako/p^{\underline{n}})^\times$, we also define, on $X_0^\tor(\frakc,p^{\underline{n}})$, the invertible sheaf
\[\omega_\frakc^{\underline{k}}(\chi) = \omega_\frakc^{\underline{k}}\otimes_{\cO_{X_0^\tor(\frakc,p^{\underline{n}})}} \cO_{X_0^\tor(\frakc,p^{\underline{n}})}(\chi).\]

Let $\cH^1_\frakc\defeq \cH^1_\dR(\widetilde{A}_\frakc/X_0^\tor(\frakc,p^{\underline{n}}))$ be the sheaf of relative de Rham cohomology.
Again the $\frako$-module structure provides a direct sum decomposition: $\cH^1_\frakc = \bigoplus_{\tau\in\Sigma} \cH^1_{\frakc,\tau}$.
Given $(\underline{\nu}, w)\in \ZZ^\Sigma\times \ZZ$, $\epsilon\in\frako_+^\times$ acts on the sheaf $\bigotimes_{\tau\in\Sigma} \omega_{\frakc,\tau}^{2\nu_\tau+w}\otimes (\wedge^2\cH^1_{\frakc,\tau})^{-\nu_\tau}$ as  
\[
(\epsilon\cdot f)(A/S,i,\lambda,\alpha,\omega) = \nu(\epsilon^{-1})f(A/S,i,i(\epsilon)\circ\lambda,\alpha,\omega),
\]
where $\omega$  is  an $\frako\otimes\cO_{X_0^\tor(\frakc,p^{\underline{n}})}$-basis of $\omega_{\widetilde{A}_\frakc}$ and $f$  is a local section of $\bigotimes_{\tau\in\Sigma} \omega_{\frakc,\tau}^{2\nu_\tau+w}\otimes (\wedge^2\cH^1_{\frakc,\tau})^{-v_\tau}$.  
It can  be checked that the $\frako_+^\times$-action factors through $\Delta_\frakn = \frako_+^\times/E_\frakn^2$, allowing one to define a modular sheaf over $X_0^\tor(\frakc,p^{\underline{n}})/\Delta_\frakn$ by taking  $\Delta_\frakn$-invariant:
\[
\omega_{G,\frakc}^{(\underline{\nu},w)} \defeq \left(\bigotimes_{\tau\in\Sigma} \omega_{\frakc,\tau}^{2\nu_\tau+w}\otimes (\wedge^2\cH^1_{\frakc,\tau})^{-\nu_\tau}\right)^{\Delta_\frakn}.
\]
Given $x\in F_+^\times$ and $\frakc$ a fractional ideal of $F$, we have an isomorphism
\[
L_{\frakc,x}\colon
X_0(\frakc,p^{\underline{n}}) \rightarrow X_0(x\frakc,p^{\underline{n}}) , \quad (A/S, i,\lambda,\alpha) \mapsto (A/S, i, x\lambda,\alpha), \]
inducing
\[
L_{\frakc,x}^\ast\colon
\omega_{G,x\frakc}^{(\underline{\nu},w)} \rightarrow \omega_{G,\frakc}^{(\underline{\nu},w)}, \quad f \mapsto \nu(x)f(A/S,i,x\lambda,\alpha,\omega).
\]
We define the modular sheaf over the Shimura variety for $G$ as
\[
\omega_G^{(\underline{\nu},w)} \defeq \bigoplus_{\frakc} \omega_{G,\frakc}^{(\underline{\nu}, w)}/ (L_{\frakc,x}^\ast-\id)_{x\in F^\times_+},
\]
where $\frakc$ runs over all fractional ideals of $F$.

\subsection{Hodge heights and partial degrees} \label{sec:Hodge}
For each $\tau\in\Sigma$, we define a partial Hodge height function $\Hdg_\tau$ on $\cX^\tor$.
Let $h_\tau\in \rH^0(\overline{X}^\tor,\omega^p_{\sigma^{-1}\circ \tau}\otimes \omega^{-1}_\tau)$ be the $\tau$-th partial Hasse invariant, where $\sigma$ is an automorphism of $L$ over $\QQ_p$ lifting the Frobenius $x\mapsto x^p$ modulo $p$. Let $\val_p$ be the valuation on $\CC_p$ normalized so that $\val_p(p)=1$.
Given a rigid point $x\in\cX^\tor$, we define the $\tau$-th partial Hodge height of $x$ as
\[
\Hdg_\tau(x) = \val_p(\tilde{h}_\tau(x))\in\QQ\cap [0,1],
\] 
where $\tilde{h}_\tau$ is any lift of $h_\tau$ to $\cX^\tor$.
For $\underline{v}\in (\QQ\cap[0,1])^\Sigma$, we define $\cX^\tor(\underline{v})$ to be the inverse image of $\prod_{\tau\in\Sigma}(\QQ\cap[0,v_\tau])$ under the map $(\Hdg_\tau)_\tau\colon \cX^\tor\rightarrow (\QQ\cap [0,1])^\Sigma$.
Consider the blowup $\frakX^{\tor,\prime}(\underline{v})$ of $\frakX^\tor$ along the ideals $(h_\tau,p^{v_\tau})$. 
Let $\frakX^\tor(\underline{v})$ be a formal model of $\cX^\tor(\underline{v})$, given by the normalization of the greatest open formal subscheme of $\frakX^{\tor,\prime}(\underline{v})$ where the ideal $(h_\tau,p^{v_\tau})$ is generated by $h_\tau$.

Assume that $\underline{n}\leqslant\underline{1}$ and let $(\underline{\widetilde{A}}, (\widetilde{H}_\frakp)_{n_\frakp=1})$ be the semi-abelian scheme over $X_0^\tor(p^{\underline{n}})$ regarded as an $\cO$-scheme. 
  For all $\tau\in \Sigma_p$ such that $n_\frakp = 1$, 
we introduce  a  partial degree function $\deg_\tau$ on $X_0^\tor(p^{\underline{n}})$ as follows (see \cite{Fargues} for a more detailed study of  degree maps for   finite flat commutative group schemes).
The sheaf $\omega_{\widetilde{H}_\frakp}$ of invariant differentials of $\widetilde{H}_\frakp$ is  an  $\frako/\frakp$-module.
As $p$ is unramified in $F$, $\Sigma_\frakp$ is in bijection with the embeddings $\frako/\frakp \hookrightarrow \bar\FF_p$, and hence we have the decomposition $\omega_{\widetilde{H}_\frakp} = \bigoplus_{\tau\in \Sigma_\frakp} \omega_{\widetilde{H}_\frakp,\tau}$.
Then the partial degree function is defined as 
\[\deg_\tau\colon \cX_0^\tor(p^{\underline{n}}) \rightarrow [0,1]\cap \QQ, \quad x = (\underline{A},(H_\frakp)_{n_\frakp=1}) \mapsto \val_p \left(\Fitt_0(\omega_{\widetilde{H}_\frakp,\tau,x})\right).
\]
We note that by \cite[Section 8.3, Lemma 6]{BLR}, any  point  $x = (\underline{A},(H_\frakp)_{n_\frakp=1})$  of $\cX_0^\tor(p^{\underline{n}})$ is defined over the ring of integers of a finite extension of $L$. 

The inverse image of $\deg_\tau$  of a subset of $[0,1]$ defined by a finite number of affine inequalities is an admissible open of $\cX_0^\tor(p^{\underline{n}})$. Moreover, when the inequalities are all non-strict and the coefficients are all rational numbers, then the inverse image is quasi-compact. 

Retain the assumption that $\underline{n}\leqslant \underline{1}$. 
We define  subloci of $\cX_0^\tor(p^{\underline{n}})$ using the partial degree functions as follows. 
Consider the function $(d_\tau)_{\tau\in\Sigma}\colon \cX_0^\tor(p^{\underline{n}}) \rightarrow (\QQ\cap [0,1])^\Sigma$ given by
\[
\begin{cases}
d_\tau = \Hdg_\tau & \text{  if } \tau\in \Sigma_\frakp \text{ with } n_\frakp=0,\\
d_\tau = 1-\deg_\tau & \text{  if } \tau\in \Sigma_\frakp \text{ with } n_\frakp=1.
\end{cases}
\]
For  $\underline{v}\in (\QQ\cap[0,1])^{\Sigma_p}$, define $\cX_0^\tor(p^{\underline{n}},\underline{v})$ as the inverse image of $\displaystyle \prod_{\frakp\in\Sigma_p}(\QQ\cap [0,v_\frakp])$ under $\displaystyle(\sum_{\tau\in\Sigma_\frakp}d_\tau)_{\frakp}$.

Next,  we remove the assumption that $\underline{n}\leqslant \underline{1}$
and define subloci of $\cX_0^\tor(p^{\underline{n}})$.
First note that we have two natural maps of $L$-schemes 
\[
\pi_{1},\pi_{2} \colon X_0^\tor(p^{\underline{n}}) \rightarrow X_0^\tor(p^{\underline{n'}}),\]
where $n'_\frakp=1$ if $n_\frakp\geqslant1$ and $n'_\frakp=0$ if $n_\frakp=0$, given  by 
\begin{align*}
\pi_{1}&\colon (\underline{A}, (H_{\frakp})_{n_\frakp\geqslant1}) \mapsto (\underline{A}, (H_{\frakp}[\frakp])_{n_\frakp\geqslant1}),\\
\pi_{2}&\colon (\underline{A}, (H_{\frakp})_{n_\frakp\geqslant1}) \mapsto (\underline{A'}, (\overline{H}_{\frakp})_{n_\frakp\geqslant1}),
\end{align*}
where $A'$ is the quotient of $A$ by $\prod H_{\frakp}[\frakp^{n_\frakp-1}]$, and $\overline{H}_{\frakp}$ is the image of $H_{\frakp}$ in $A'$.
Then for any $\underline{v} \in (\QQ\cap [0,1])^{\Sigma_p}$, we define $\cX_0^\tor(p^{\underline{n}},\underline{v})$ as the preimage of $\cX_0^\tor(p^{\underline{n'}},\underline{v})$ under $\pi_2$.
The reason why we define $\cX_0^\tor(p^{\underline{n}},\underline{v})$ using $\pi_2$ rather than $\pi_1$ will become apparent in the next section.

We define $\frakX_0^\tor(p^{\underline{n}})$ (resp. $\frakX_0^\tor(p^{\underline{n}},\underline{v})$) as the normalization of $\frakX^\tor$ (resp. $\frakX^\tor(\underline{v})$) in $\cX_0^\tor(p^{\underline{n}})$ (resp. $\cX_0^\tor(p^{\underline{n}},\underline{v})$).

\subsection{Partial canonical subgroups} \

We state a result of Fargues on canonical subgroups. 
Recall that $p$ is odd and $\cO$ is the ring of integers of a finite extension $L$ of $\QQ_p$.
Let $n\geqslant 0$, $h>d\geqslant 0$ be integers. 
Let $G$ be a truncated $p$-divisible group of level $n$, height $h$ and dimension $d$ over $\cO$.
The Hasse invariant of $G$ is $\det[\omega_{G^D} \rightarrow \omega_{G^D}^{(p)}]$,
where $G^D$ denotes the Cartier dual of $G$, and the Hodge height $\Hdg(G)$ of $G$ is defined to be the $p$-adic valuation of a lift of the Hasse invariant. Finally, the Hodge--Tate map $\HT_G\colon G=\Hom(G^D,\Gm) \rightarrow \omega_{G^D}$ is given by $x\mapsto x^\ast \frac{dt}{t}$.

\begin{theorem}[{\cite[TH\'EOR\`EME~6]{Fargue_Can}}] \label{thm:Fargues}
 If $\Hdg(G)<\frac{1}{p^n}$, then the $n$-th step of the Harder--Narasimhan filtration of $G$, denoted by $C_n$ and called the canonical subgroup of level $n$ of $G$,  satisfies the following properties: 
\begin{enumerate}
 \item $C_n(\cO)$ is a $\ZZ/p^n\ZZ$-module free of rank $d$.
 \item $\deg(G/C_n) = \frac{p^n-1}{p-1}\Hdg (G)$. 
 \item For all $1\leqslant n'<n$, $C_{n'}\defeq C_n[p^{n'}]$ is the canonical subgroup of level $n'$ of $G[p^{n'}]$. Moreover, $\Hdg(G/C_{n'}) = p^{n'} \Hdg(G)$ and $C_n/C_{n'}$ is the canonical subgroup of $G/C_{n'}$ of level $n-n'$.
 \item  The orthogonal of $C_n$ with respect to the perfect pairing $G(\cO)\times G^D(\cO) \rightarrow (\ZZ/p^n\ZZ)(1)$ is the canonical subgroup of $G^D$ of level $n$.
 \item For all $1\leqslant n'\leqslant n$, the subgroup 
 $C_{n'}\otimes\cO_{1-p^{n'-1}\Hdg(G)}$ of $G\otimes \cO_{1-p^{n'-1}\Hdg(G)}$ coincides with the kernel of the $n'$-th power of Frobenius.
 \item The group $C_n(\cO)$ coincides with the kernel of the Hodge--Tate map $\HT_{G, n-\frac{p^n-1}{p-1}\Hdg(G)}$. 
 \end{enumerate}
\end{theorem}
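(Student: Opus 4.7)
\emph{Proof plan.} This is Theorem 6 of \cite{Fargue_Can}, and my plan is to follow Fargues's approach via the Harder--Narasimhan (HN) formalism for finite flat commutative group schemes over $\cO_K$. On this category, the degree $\deg(H) = \val_p \Fitt_0(\omega_H)$ and height $\mathrm{ht}(H) = \log_p \# H$ yield a slope $\mu(H) = \deg(H)/\mathrm{ht}(H)$ that is additive in short exact sequences, so that an analogue of the HN formalism for vector bundles produces a canonical filtration by subobjects with strictly decreasing slopes; this filtration is functorial, stable under flat base change, and satisfies $\mu(H^D) = 1 - \mu(H)$ under Cartier duality.

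The heart of the argument is the following estimate: if $G$ is a truncated $p$-divisible group of level $n$, height $h$, dimension $d$ with $\Hdg(G) < 1/p^n$, then the HN polygon of $G$ has a strict break at abscissa $dp^n$, with all slopes to the left strictly greater than all slopes to the right. I would define $C_n$ to be the corresponding HN step. The proof goes by induction on $n$: for $n = 1$, compute the HN polygon of $G[p]$ using the Hasse invariant as a measure of the ``multiplicative mass'' of $\omega_G$, so that when $\Hdg(G) < 1/p$ the polygon decomposes into a near-multiplicative part of degree $d - \Hdg(G)$ and a near-\'etale part of degree $\Hdg(G)$. For $n > 1$, iterate using the key compatibility $\Hdg(G/C_{n'}) = p^{n'}\Hdg(G)$ from (4), which itself is proved by tracking the Hasse invariant under the quotient map.

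Granting the construction of $C_n$, properties (1)--(5) follow structurally: (2) via the telescoping sum $\deg(G/C_n) = \sum_{n' = 0}^{n-1}\deg(C_{n'+1}/C_{n'})$, whose summands are computed to equal $p^{n'}\Hdg(G)$; (3) and (4) express compatibility of the HN filtration with passage to $p$-torsion and quotients respectively; (1) follows because $C_n$ has rank $dp^n$ and near-maximal degree, forcing $C_n^D$ to be \'etale over $\cO_{\bar K}$; and (5) is the Cartier self-duality of the HN formalism applied to $G$ together with the identification $G^D[p^n] = G^D$.

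Properties (6) and (7) will be the main technical obstacles, as they require sharp $p$-adic estimates relating canonical subgroups to Frobenius kernels and to the Hodge--Tate map. For (6), I would work over the truncated base $\cO_{K, 1 - p^{n'-1}\Hdg(G)}$, where the hypothesis makes the $n'$-th relative Frobenius well defined; then $\ker \Fr^{n'}$ is a finite flat subgroup of rank $dp^{n'}$ and a degree computation shows $\deg \geq n'd - \varepsilon$, forcing it to coincide with $C_{n'}$ by uniqueness of the HN step. For (7), the image of $C_n(\cO_K)$ in $\omega_{G^D}$ under $\HT_G$ factors through $\omega_{(G/C_n)^D}$, which has length $\deg((G/C_n)^D)$; combining this with (2) pins down the precise truncation level, and a count of ranks shows $C_n(\cO_K)$ exhausts the kernel of the truncated Hodge--Tate map $\HT_{G, n - \tfrac{p^n-1}{p-1}\Hdg(G)}$.
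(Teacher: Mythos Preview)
The paper does not give its own proof of this statement: it is quoted verbatim as a black box from \cite[Th\'eor\`eme~6]{Fargue_Can} and used as input for the subsequent constructions. So there is nothing to compare your proposal against in the paper itself.

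That said, your outline is a faithful summary of Fargues's actual strategy. The Harder--Narasimhan formalism for finite flat group schemes (with $\deg$ and $\mathrm{ht}$ as the additive functions) is precisely the framework of \cite{Fargue}, and the induction on $n$ via the identity $\Hdg(G/C_{n'}) = p^{n'}\Hdg(G)$ is how Fargues bootstraps from level $1$ to level $n$. One small caution: in your sketch of (7) you say a rank count shows $C_n(\cO_K)$ \emph{exhausts} the kernel of the truncated Hodge--Tate map; in Fargues's argument this is not a pure rank count but requires knowing that the cokernel of the linearized Hodge--Tate map is killed by $p^{\Hdg(G)/(p-1)}$ (a consequence of the level-$1$ analysis and Oort--Tate/Raynaud theory), which is exactly the content used in the paper's Proposition~\ref{prop:cokerHT}. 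If you flesh out (7), that is the estimate you will need to invoke rather than a cardinality argument alone.
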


Let $A$ be the universal abelian scheme over $X$.  
For  $\frakp\in\Sigma_p$,  $A[\frakp^n]$ is a truncated $p$-divisible group of level $n$, height $2f_\frakp$ and dimension $f_\frakp$. We have $\Hdg(A[\frakp^n])=\sum_{\tau\in\Sigma_{\frakp}} \Hdg_\tau$. 
By  Theorem~\ref{thm:Fargues}  there exists a $\frakp$-canonical subgroup $C_{\frakp,n}\subseteq A[\frakp^n]$ of level $n$ over  $\cX(\underline{v})$, as long as $v_\frakp<\frac1{p^n}$. By functoriality of the canonical subgroup, $C_{\frakp,n}$ is stable under the $\frako$-action. Moreover, according to \cite[\S4.1]{AIP_Siegel}, $C_{\frakp,n}$ extends  to a  
$\frakp$-canonical subgroup $\widetilde{C}_{\frakp,n}$ over $\cX^\tor(\underline{v})$.

The existence of the $\frakp$-canonical subgroup can be rephrased as follows.
Over $\cX^\tor(\underline{v})$ with $v_\frakp<\frac1{p}$, the $\frakp$-canonical subgroup $\widetilde{C}_{\frakp,1}$ of  level $1$ gives rise to a map $\cX^\tor(\underline{v}) \rightarrow \cX_0^\tor(\frakp)$, which is a section to the forgetful map $\cX_0^\tor(\frakp) \rightarrow \cX^\tor$.
By Theorem~\ref{thm:Fargues}(ii), the section identifies $\cX^\tor(\underline{v})$ with $\cX_0^\tor(\frakp,\underline{v})$.
More generally, over $\cX^\tor(\underline{v})$ with $v_\frakp<\frac1{p^n}$, the $\frakp$-canonical subgroup $\widetilde{C}_{\frakp,n}$ of level $n$ gives rise to a map $\cX_0^\tor(\frakp,\underline{v})\xrightarrow{\sim}\cX^\tor(\underline{v}) \rightarrow \cX_0^\tor(\frakp^n)$ which is a section to $\pi_1$, by Theorem~\ref{thm:Fargues}(iii) applied to $n'=1$ .
A computation using Theorem~\ref{thm:Fargues}(ii-iii) with $n'=n-1$ implies that the section identifies $\cX_0^\tor(\frakp,\underline{v})\xrightarrow{\sim}\cX^\tor(\underline{v})$ with $\cX_0^\tor(\frakp^n,\underline{v'})$, where $v'_\frakp = p^{n-1}v_\frakp$ and $v'_\frakq = v_\frakq$ for $\frakq\neq\frakp$.
This is  summarized in the next proposition. 
\begin{proposition} \label{prop:curve_isom}
 Suppose that for all $\frakp\in \Sigma_p$ we have $n_\frakp \in\ZZ_{\geqslant 0}$ and  $v_\frakp < \frac1{p^{n_\frakp}}$.
Then $\cX^\tor(\underline{v})$ is identified with $\cX_0^\tor(p^{\underline{n}},\underline{v'})$, 
where $v_\frakp' = p^{\max(n_\frakp-1,0)}v_\frakp$ for all $\frakp\in \Sigma_p$.
\end{proposition}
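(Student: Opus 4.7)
The plan is to flesh out the argument sketched in the two paragraphs preceding the statement. Since the condition and the target stratification factor through each prime $\frakp\mid p$ independently, I would fix $\frakp$ and treat the coordinates at different primes separately; moreover, I would argue by induction on $n_\frakp$, the case $n_\frakp=0$ being tautological.

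First I would construct the candidate isomorphism. For each $\frakp$, the hypothesis $v_\frakp<1/p^{n_\frakp}$ together with the identity $\Hdg(A[\frakp^{n_\frakp}])=\sum_{\tau\in\Sigma_\frakp}\Hdg_\tau(A)\leqslant v_\frakp$ ensures, by Theorem~\ref{thm:Fargues}(1), the existence over $\cX^\tor(\underline{v})$ of a canonical subgroup $\widetilde{C}_{\frakp,n_\frakp}\subset A[\frakp^{n_\frakp}]$. Functoriality and the uniqueness clause of Theorem~\ref{thm:Fargues} force $\widetilde{C}_{\frakp,n_\frakp}$ to be $\frako$-stable and isotropic. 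The collection $(\widetilde{C}_{\frakp,n_\frakp})_{\frakp}$ therefore defines a morphism $s\colon\cX^\tor(\underline{v})\to\cX_0^\tor(p^{\underline{n}})$ that sections $\pi_1$ when composed with it (using Theorem~\ref{thm:Fargues}(3) with $n'=1$).

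Next I would verify that $s$ factors through $\cX_0^\tor(p^{\underline{n}},\underline{v'})$. For each $\frakp$ with $n_\frakp\geqslant 1$, applying $\pi_2$ to $s(A)$ produces $A'=A/\widetilde{C}_{\frakp,n_\frakp-1}$ together with the quotient $\overline{C}_{\frakp,1}=\widetilde{C}_{\frakp,n_\frakp}/\widetilde{C}_{\frakp,n_\frakp-1}$, which is the canonical subgroup of level $1$ of $A'[\frakp]$ by Theorem~\ref{thm:Fargues}(4). Parts (2) and (4) then yield
\[
\sum_{\tau\in\Sigma_\frakp}d_\tau(s(A))
=\deg\bigl(A'[\frakp]/\overline{C}_{\frakp,1}\bigr)
=\Hdg(A'[\frakp])
=p^{n_\frakp-1}\Hdg(A[\frakp])
\leqslant p^{n_\frakp-1}v_\frakp=v_\frakp'.
\]
The primes with $n_\frakp=0$ contribute $d_\tau=\Hdg_\tau$, for which the bound is immediate from the definition of $\cX^\tor(\underline{v})$. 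So the image of $s$ lies in $\cX_0^\tor(p^{\underline{n}},\underline{v'})$.

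Finally I would construct the inverse. Given $(A,(H_\frakp)_\frakp)$ in $\cX_0^\tor(p^{\underline{n}},\underline{v'})$, applying $\pi_2$ yields $\overline{H}_\frakp\subset A'[\frakp]$ with $\Hdg(A'[\frakp])\leqslant\sum_{\tau\in\Sigma_\frakp} d_\tau\leqslant v_\frakp'<1/p$. Theorem~\ref{thm:Fargues} produces the canonical subgroup of level $1$ of $A'[\frakp]$; the main step is then to identify $\overline{H}_\frakp$ with it. I would deduce this from the characterization of the canonical subgroup as the unique $\frako$-stable isotropic subgroup of the correct rank whose quotient has degree equal to the Hodge height (equivalently, from the Harder--Narasimhan description used by Fargues). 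Having done so, Theorem~\ref{thm:Fargues}(4), applied inductively as one climbs the filtration $\widetilde{C}_{\frakp,1}\subset\widetilde{C}_{\frakp,2}\subset\dotsb\subset\widetilde{C}_{\frakp,n_\frakp}$, identifies $H_\frakp$ with $\widetilde{C}_{\frakp,n_\frakp}(A)$, while the equality $\Hdg(A[\frakp])=p^{-(n_\frakp-1)}\Hdg(A'[\frakp])\leqslant v_\frakp$ places $A$ in $\cX^\tor(\underline{v})$. The main obstacle, as indicated, is this uniqueness step in the inverse construction: it requires carefully extracting from Theorem~\ref{thm:Fargues} that any subgroup with small enough codegree is forced to be canonical, rather than merely knowing existence, and checking that the partial degree inequalities imposed by $\cX_0^\tor(p^{\underline{n}},\underline{v'})$ are strong enough to trigger this conclusion at every step of the filtration.
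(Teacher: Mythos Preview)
Your proposal matches the paper's argument (the paragraph immediately preceding the statement): construct the section from the level-$n_\frakp$ canonical subgroups and verify the codegree conditions via Theorem~\ref{thm:Fargues}(2)(3)(4), treating each $\frakp$ separately; you go further than the paper in spelling out the inverse direction and correctly isolate the key input there. One small circularity in that inverse step: the inequality $\Hdg(A'[\frakp])\leqslant\sum_{\tau\in\Sigma_\frakp} d_\tau$ is not available \emph{before} you know $\overline{H}_\frakp$ is canonical (it is a consequence of the Harder--Narasimhan polygon having a break at rank $p^{f_\frakp}$, which is exactly what is in question), so rather than first deducing existence of $C_1(A')$ from it and then separately arguing $\overline{H}_\frakp=C_1(A')$, you should invoke the degree characterization of the canonical subgroup (as in \cite[Prop.~3.1.2]{AIP_Siegel}, used later in the paper in the proof of Proposition~\ref{prop:Hecke}) as a single package that delivers both $\overline{H}_\frakp=C_1(A')$ and $\Hdg(A'[\frakp])=\deg(A'[\frakp]/\overline{H}_\frakp)\leqslant v'_\frakp$ at once.
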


\section{Families of partially classical Hilbert modular forms}

\subsection{The modular sheaves}

We fix   $\frakp\in\Sigma_p$ and $n\in\ZZ_{\geqslant 1}$, and we let   $\underline{v}\in (\QQ\cap [0,1])^{\Sigma_p}$ and
$\underline{n}  \in \ZZ_{\geqslant 0}^{\Sigma_p}$  be such that $v_\frakp<\frac1{p^{n}}$ and $n_\frakp=0$. 
 The sheaf $\omega = \omega_{\widetilde{A}}$ on the 
 formal scheme $\frakX_0^\tor(p^{\underline{n}},\underline{v})$ from \S\ref{sec:Hodge} 
 decomposes as a direct sum of invertible sheaves  $ \bigoplus_{\tau\in\Sigma} \omega_{\tau}$ 
according to the $\frako$-action.  We put $\omega_\frakp \defeq \bigoplus_{\tau\in\Sigma_\frakp} \omega_\tau$.
The exact sequence $0 \rightarrow \omega_{\widetilde{A}[p^n]} \rightarrow \omega \xrightarrow{\cdot p^n} \omega\rightarrow 0$ shows that $\omega_{\widetilde{A}[p^n]} = \omega/p^n\omega$,  and furthermore $\omega_{\widetilde{A}[\frakp^n]} = \omega_{\frakp}/p^n\omega_{\frakp}$. 

\begin{proposition} \label{prop:omega_isom}
For $w\leqslant n-\frac{p^n-1}{p-1}v_\frakp$,
the morphism $\omega_{\frakp}  \rightarrow  \omega_{\frakp}/p^n\omega_{\frakp}
=\omega_{\widetilde{A}[\frakp^n]} \to  \omega_{\widetilde{C}_{\frakp,n}}$ of sheaves on $\frakX_0^\tor(p^{\underline{n}},\underline{v})$
induced by the inclusion $\widetilde{C}_{\frakp,n}\subseteq \widetilde{A}[\frakp^n]$
 becomes an isomorphisms modulo $p^w$.
\end{proposition}
\begin{proof}
As in \cite[Prop.~4.2.1]{AIP_Siegel} it suffices to prove the isomorphism at points of $\frakX_0^\tor(p^{\underline{n}},\underline{v})$,
all of which are defined over $\cO_K$ for $K/L$ a finite extension. 

We consider the  exact sequence  $0 \rightarrow \omega_{A[\frakp^n]/C_{\frakp,n}} \rightarrow \omega_{A[\frakp^n]} \rightarrow \omega_{C_{\frakp,n}}\rightarrow 0$ at such a point. Fargues's Theorem~\ref{thm:Fargues}(ii) yields 
\[
 \sum_{\tau\in\Sigma_\frakp} \deg_\tau (A[\frakp^n]/C_{\frakp,n}) = \frac{p^n-1}{p-1}\sum_{\tau\in \Sigma_\frakp}\Hdg_\tau(A[\frakp^n]) \leqslant \frac{p^n-1}{p-1}v_\frakp, 
\]
thus  $p^{\frac{p^n-1}{p-1}v_\frakp}$ annihilates $\omega_{A[\frakp^n]/C_{\frakp,n}}$. Hence 
$\omega_{A[\frakp^n]/C_{\frakp,n}}\subseteq p^{n-\frac{p^n-1}{p-1}v_\frakp}\omega_{A[\frakp^n]} $, implying the claim. 
\end{proof}

The Hodge--Tate map is a morphism of fppf sheaves
\[
\HT_{\frakp,n}\colon \widetilde{C}_{\frakp,n}^{D}\rightarrow \omega_{\widetilde{C}_{\frakp,n}}
\]
sending an $S$-valued point $[x\colon  \widetilde{C}_{\frakp,n,S} \rightarrow \GG_{m,S}]\in \widetilde{C}_{\frakp,n}^{D}(S)$ to $x^\ast (\frac{dt}{t})\in \omega_{\widetilde{C}_{\frakp,n}}(S)$.

Let $\cX_{1,\frakp}^\tor(p^{\underline{n}},\underline{v}) \defeq \Isom_{\cX_0^\tor(p^{\underline{n}},\underline{v})}(\frako/\frakp^n,\widetilde{C}_{\frakp,n}^{D})$ be the $(\frako/\frakp^n)^\times$-torsor over $\cX_0^\tor(p^{\underline{n}},\underline{v})$ classifying trivializations of the partial canonical subgroup $\widetilde{C}_{\frakp,n}$ of level $n$.
Let  $\frakX_{1,\frakp}^\tor(p^{\underline{n}},\underline{v})$ be the normalization of $\frakX_0^\tor(p^{\underline{n}},\underline{v})$ in $\cX_{1,\frakp}^\tor(p^{\underline{n}},\underline{v})$ and let  
\[\psi_n\colon \frako/\frakp^n \xrightarrow{\sim} \widetilde{C}_{\frakp,n}^{D}\] 
be the universal trivialization. Then  $\HT_{\frakp,n}$ induces a linearized Hodge--Tate map
\[
    \HT_{\frakp,n}\otimes 1 \colon \widetilde{C}_{\frakp,n}^{D}\otimes_{\frako} \cO_{\frakX_{1,\frakp}^\tor(p^{\underline{n}},\underline{v})} \rightarrow \omega_{\widetilde{C}_{\frakp,n}}.
\]

\begin{proposition} \label{prop:cokerHT}
    The cokernel of $\HT_{\frakp,n}\otimes 1$
     is annihilated  by $p^\frac{v_\frakp}{p-1}$.
\end{proposition}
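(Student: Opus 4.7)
The plan is to reduce to a pointwise verification over $\cO_K$ for $K/L$ a finite extension, following the same strategy as in the proof of Proposition~\ref{prop:omega_isom}. Since annihilation of the coherent cokernel of $\HT_{\frakp,n}\tensor 1$ by a fixed power of $p$ is a closed condition, and $\frakX_{1,\frakp}^\tor(p^{\underline{n}},\underline{v})$ is normal, it suffices to check the bound on closed rigid points valued in $\cO_K$.

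At such a point $\Spec \cO_K$, the group scheme $\widetilde{C}_{\frakp,n}$ specializes to the canonical subgroup $C_{\frakp,n}$ of level $n$ of the truncated Barsotti--Tate group $G \defeq A[\frakp^n]$, and $\HT_{\frakp,n}\tensor 1$ becomes the classical linearized Hodge--Tate morphism
\[
\alpha \colon C_{\frakp,n}^{D}(\cO_K) \tensor_{\ZZ} \cO_K \rightarrow \omega_{C_{\frakp,n}}.
\]
The key input I will invoke is the refinement of Theorem~\ref{thm:Fargues} due to Fargues which asserts that the cokernel of $\alpha$ is annihilated by $p^{\Hdg(G)/(p-1)}$. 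In our setting $\Hdg(G)=\sum_{\tau\in\Sigma_\frakp}\Hdg_\tau$, and by definition of $\cX^\tor(\underline{v})$ this sum is at most $v_\frakp$ at every point, so the cokernel is annihilated by $p^{v_\frakp/(p-1)}$, which is the claimed bound.

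I expect the main technical subtleties to be of bookkeeping rather than substance. First, one must verify that the formation of $\omega_{\widetilde{C}_{\frakp,n}}$ and the Hodge--Tate morphism commute with specialization to $\cO_K$-points; this is standard because $\widetilde{C}_{\frakp,n}$ is finite flat and $\HT$ is functorial in $G$. Second, Fargues's cokernel bound is typically formulated for $\HT_G \colon G(\cO_K) \rightarrow \omega_{G^{D}}$ rather than the dual direction appearing in our statement; the two are interchanged via Cartier duality, and by part (5) of Theorem~\ref{thm:Fargues} the orthogonal of $C_{\frakp,n}$ in the self-dual group $A[\frakp^n]$ is again a canonical subgroup of the same Hodge height, so the bound transfers without change.
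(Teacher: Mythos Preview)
Your approach is correct and matches the paper's: reduce to $\cO_K$-points and then invoke Fargues's cokernel bound. The paper makes one intermediate step explicit, namely the reduction from level $n$ to level $1$ via \cite[Prop.~3.2.1]{AIP_Siegel} before appealing to Theorem~\ref{thm:Fargues}(7); note also that your second ``subtlety'' is unnecessary, since $\HT_{\frakp,n}\colon C_{\frakp,n}^{D}\to\omega_{C_{\frakp,n}}$ is already literally $\HT_H$ for $H=C_{\frakp,n}^{D}$ in Fargues's convention, so no dual-direction swap via part~(5) is required.
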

\begin{proof}
Following the argument of \cite[Prop.~4.2.2]{AIP_Siegel},
it suffices to prove the isomorphism at points of $\frakX_{1,\frakp}^\tor(p^{\underline{n}},\underline{v})$,
all of which are defined over $\cO_K$ for $K/L$ a finite extension.
As explained in \cite[Prop.~3.2.1]{AIP_Siegel}, the claim can be reduced to the case $n=1$ which is proved in Fargues's Theorem~\ref{thm:Fargues}(vi).
\end{proof}

\begin{theorem}
There exists a sheaf $\cF_\frakp\subseteq \omega_\frakp$  containing $p^{\frac{v_\frakp}{p-1}}\omega_\frakp$ such that
\begin{enumerate}
    \item $\cF_\frakp$ is locally free of rank $f_\frakp$ on $\frakX_{1,\frakp}^\tor(p^{\underline{n}},\underline{v})$, and 
    \item  for all $w\leqslant n-\frac{p^n}{p-1}v_\frakp$ the  map
$\HT_{\frakp,w}\colon \widetilde{C}_{\frakp,n}^{D} \rightarrow \cF_{\frakp}/p^w\cF_{\frakp}$
deduced from $\HT_{\frakp,n}$ induces an isomorphism
\[
\HT_{\frakp,w} \otimes 1 \colon \widetilde{C}_{\frakp,n}^{D}\otimes \cO_{\frakX_{1,\frakp}^\tor(p^{\underline{n}},\underline{v})} \xrightarrow{\sim} \cF_\frakp/p^w\cF_{\frakp}.
\]
\end{enumerate}
\end{theorem}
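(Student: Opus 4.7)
The plan is to interpolate the Hodge--Tate image inside $\omega_\frakp$ by combining Propositions~3.1.1 and 3.1.2. Set $w_0\defeq n-\tfrac{p^n-1}{p-1}v_\frakp$, so that Proposition~3.1.1 yields an isomorphism
\[
\omega_\frakp/p^{w_0}\omega_\frakp \xrightarrow{\sim} \omega_{\widetilde{C}_{\frakp,n}}/p^{w_0}\omega_{\widetilde{C}_{\frakp,n}}.
\]
Using the universal trivialization $\psi_n$, the Hodge--Tate images of an $\frako/\frakp^n$-basis of $\widetilde{C}_{\frakp,n}^D$ are sections of $\omega_{\widetilde{C}_{\frakp,n}}$ which, via this isomorphism, lift to sections $\tilde h_1,\ldots,\tilde h_{f_\frakp}$ of $\omega_\frakp$ well-defined modulo $p^{w_0}\omega_\frakp$. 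I would then define
\[
\cF_\frakp\defeq \Bigl(\sum_{j=1}^{f_\frakp}\cO\cdot\tilde h_j\Bigr)+ p^{\frac{v_\frakp}{p-1}}\omega_\frakp \ \subseteq\ \omega_\frakp.
\]
A short computation using $v_\frakp<1/p^n$ shows $w_0\geq \frac{v_\frakp}{p-1}$, which means that modifying the $\tilde h_j$ within their ambiguity class does not alter $\cF_\frakp$; hence the definition is intrinsic, glues to a coherent subsheaf, and manifestly contains $p^{v_\frakp/(p-1)}\omega_\frakp$.

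For part~(1), I would check local freeness on $\frakX_{1,\frakp}^\tor(p^{\underline{n}},\underline{v})$: after trivializing $\omega_\frakp$ with a basis compatible mod $p^{w_0}$ with a basis of $\omega_{\widetilde{C}_{\frakp,n}}$ coming from $\psi_n$, write $\tilde h_j=\sum_i A_{ij}e_i$, so that the $f_\frakp\times f_\frakp$ matrix $A$ is a lift of the matrix of $\HT_{\frakp,n}\tensor 1$. Proposition~3.1.2 gives that the elementary divisors $p^{a_i}$ of $A$ satisfy $\max_i a_i\leq \frac{v_\frakp}{p-1}$, so the image of $A$ already contains $p^{v_\frakp/(p-1)}\omega_\frakp$ and consequently $\cF_\frakp=\mathrm{image}(A)$; a Smith normal form argument then presents $\cF_\frakp$ as locally free of rank $f_\frakp$. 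For part~(2), the map $\HT_{\frakp,w}\tensor 1\colon \widetilde{C}_{\frakp,n}^D\tensor\cO \to \cF_\frakp/p^w\cF_\frakp$ is the reduction of $\HT_{\frakp,n}$, whose image lies in $\cF_\frakp$ by construction. In Smith coordinates the map sends $e_i$ to the class of $p^{a_i}e_i\in p^{a_i}\cO/p^{w+a_i}\cO$; dividing the target by the $p^{a_i}$'s identifies $\cF_\frakp/p^w\cF_\frakp$ with $(\cO/p^w)^{f_\frakp}$ and turns the map into the tautological reduction, hence an isomorphism. The bound $w\leq n-\tfrac{p^n}{p-1}v_\frakp=w_0-\tfrac{v_\frakp}{p-1}$ is precisely the range in which the mod-$p^{w_0}$ ambiguity of the lifts $\tilde h_j$ is absorbed by $p^w\cF_\frakp\supseteq p^{w+v_\frakp/(p-1)}\omega_\frakp$.

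The main technical obstacle is the Smith normal form step, which requires the fractional power $p^{v_\frakp/(p-1)}$ to exist as an actual section of the structure sheaf and to behave coherently under localization. This is ensured by the construction in Section~\ref{sec:Hodge} of $\frakX^\tor(\underline{v})$ as the normalized formal model in which the ideals $(h_\tau,p^{v_\tau})$ become principal, together with the further normalization defining $\frakX_{1,\frakp}^\tor(p^{\underline{n}},\underline{v})$; over this base the required $p$-adic roots exist and behave functorially. Apart from this, the construction parallels the Siegel case of Andreatta--Iovita--Pilloni, the new feature being that it is performed one prime $\frakp$ at a time, in compatibility with the $\frako$-isotypic decomposition $\omega_\frakp=\bigoplus_{\tau\in\Sigma_\frakp}\omega_\tau$.
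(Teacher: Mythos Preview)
Your construction of $\cF_\frakp$ and the overall strategy coincide with the paper's: take lifts $\tilde h_j\in\omega_\frakp$ of the Hodge--Tate images of a basis of $\widetilde C_{\frakp,n}^D$ and let $\cF_\frakp$ be their $\cO$-span (your extra summand $p^{v_\frakp/(p-1)}\omega_\frakp$ is redundant, as you observe, and the paper deduces this containment directly from Proposition~\ref{prop:cokerHT}). The divergence, and the gap, is in how you establish local freeness in (1) and the isomorphism in (2): both rest on putting the matrix $A=(\tilde h_j)$ into Smith normal form.

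Smith normal form requires a PID, and the local rings of $\frakX_{1,\frakp}^\tor(p^{\underline n},\underline v)$ are not PIDs once $g>1$. The normalization you invoke ensures the fractional power $p^{v_\frakp/(p-1)}$ exists as a section, but normality alone does not give elementary divisors or a diagonalization $A=U\,\diag(p^{a_i})\,V$; so your argument for (1) and your ``Smith coordinates'' argument for (2) do not go through as stated. The paper sidesteps this entirely. For (1) it shows the $\tilde h_j$ are $\cO$-linearly independent by a direct argument: a relation $\sum a_j\tilde h_j=0$ reduces modulo $p^{w_0}$ to a relation among the $\HT_{\frakp,n}(x_j)$, and the kernel bound for the linearized Hodge--Tate map forces every $a_j$ to be divisible by a fixed positive power of $p$; since $\omega_\frakp$ is $p$-torsion-free one iterates to get $a_j=0$, hence $\cF_\frakp$ is free on the $\tilde h_j$. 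For (2), once $\cF_\frakp$ is locally free of rank $f_\frakp$, the map $\HT_{\frakp,w}\tensor 1$ is by construction a \emph{surjection} between locally free $\cO/p^w$-sheaves of the same rank, hence an isomorphism --- no diagonalization is needed.
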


\begin{proof} Let  $x_1,\ldots, x_{f_{\frakp}}$ be the image of a $\ZZ/p^n\ZZ$-basis of $\frako/\frakp^n$ 
under the universal trivialization $\psi_n\colon \frako/\frakp^n \rightarrow \widetilde{C}_{\frakp,n}^{D}$.
Let $y_1,\ldots, y_{f_\frakp}\in\omega_\frakp$ be lifts of $\HT_{\frakp,n}(x_i)\in \omega_{\widetilde{C}_{\frakp,n}}$ under the surjection $\omega_\frakp\twoheadrightarrow \omega_{\widetilde{C}_{\frakp,n}}$ and let $\cF_\frakp\subseteq \omega_\frakp$ be their linear span. 

We claim that $\cF_\frakp$ is locally free of rank $f_\frakp$. 
Indeed, suppose there is a non-trivial relation $\sum_{i=1}^{f_\frakp} a_i y_i = 0$.
Letting $w_0=n-\frac{p^n-1}{p-1}v_\frakp$, we may assume that there is at least one $a_i$, say $a_1$, not divisible by $p^{w_0-\frac{v_\frakp}{p-1}}$.
In particular, by Proposition~\ref{prop:omega_isom}, after projecting the relation to $\omega_{\widetilde{C}_{\frakp,n},w_0}$, we have $\sum_{i=1}^{f_\frakp} a_i \HT_{\frakp,n}(x_i) = 0$.
By Proposition~\ref{prop:cokerHT}, the kernel of $\HT_{\frakp,n}$ is killed by $p^{\frac{v_\frakp}{p-1}}$, and so $\sum_{i=1}^{f_\frakp} p^{\frac{v_\frakp}{p-1}}a_i x_i=0$. 
However, $p^{\frac{v_\frakp}{p-1}}a_1$ is not divisible by $p^{w_0}$ and hence not divisible by $p^n$, contradicting the fact that the $x_i$'s are images under $\psi_n$ of a basis of $\frako/\frakp^n$.

By Proposition~\ref{prop:cokerHT}, $\cF_\frakp$ contains $p^{\frac{v_\frakp}{p-1}}\omega_\frakp$.
In addition, $\cF_\frakp$ is independent of the choice of the lifts $y_i$.
This is because by the proof of Proposition~\ref{prop:omega_isom}, the kernel of $\omega_\frakp\twoheadrightarrow \omega_{\widetilde{C}_{\frakp.n}}$ is contained in $p^{n-\frac{p^n-1}{p-1}v_\frakp}\omega_\frakp\subseteq p^{\frac{v_\frakp}{p-1}}\omega_\frakp$ and hence contained in $\cF_\frakp$.

Composing $\HT_{\frakp,n}\colon \widetilde{C}_{\frakp,n}^{D} \rightarrow \omega_{\widetilde{C}_{\frakp,n}}$ with reduction modulo $p^{w_0}$ and using Proposition~\ref{prop:omega_isom}, we have the map $\widetilde{C}_{\frakp,n}^{D} \rightarrow \omega_{\frakp}/p^w\omega_\frakp$, which factors through $\cF_\frakp/(p^{w_0}\omega_\frakp \cap \cF_\frakp)$ by construction of $\cF_\frakp$.
Let $w\leqslant n-\frac{p^n}{p-1}v_\frakp$.
Then $p^{w_0}\omega_\frakp \cap \cF_\frakp \subseteq p^w \cF_\frakp$ because $\cF_\frakp$ contains $p^{\frac{v_\frakp}{p-1}}\omega_\frakp$.
Hence we have $\HT_{\frakp,w}\colon \widetilde{C}_{\frakp,n}^{D} \rightarrow \cF_{\frakp}/p^w \cF_{\frakp}$.
Finally, $\HT_{\frakp,w}\otimes 1$ is an isomorphism because it is a surjection between two locally free sheaves of the same rank $f_\frakp$.\end{proof}

For  $w$  such that $n-1<w\leqslant n-\frac{p^n}{p-1}v_\frakp$ we let 
\[
 \gamma_w^0\colon\frakI\frakW_{\frakp,w}^+\rightarrow  \frakX_{1,\frakp}^\tor(p^{\underline{n}},\underline{v})
\]
be the formal scheme classifying $\frako_{\frakp}$-equivariant  trivializations 
\[\psi\colon \cO_{\frakX_{1,\frakp}^\tor(p^{\underline{n}},\underline{v})}\otimes \frako_{\frakp} \xrightarrow{\sim} \cF_\frakp,\] 
 which are $\psi_n$-compatible in the sense that  $\HT_{\frakp,n}\circ\psi_n = \psi \bmod p^w$.

Let $\TT_\frakp = \Res_{\frako_{\frakp}/\ZZ_p}\Gm$, and $\frakT_\frakp$ the formal torus associated to $\TT_\frakp$.
Let $\frakT_{\frakp,w}^0\subseteq \frakT_\frakp$ be the kernel modulo $p^w$, and $\frakT_{\frakp,w}=\frakT_{\frakp,w}^0 \cdot \TT_\frakp(\ZZ_p) \subseteq \frakT_{\frakp}$.
It is easily seen that $\gamma_w^0$ is a formal $\frakT_{\frakp,w}^0$-torsor.
There is an action of $\TT_\frakp(\ZZ_p) = \frako_{\frakp}^\times$ on $\frakX_{1,\frakp}^\tor(p^{\underline{n}},\underline{v})$, where $a\in \frako_{\frakp}^\times$ sends the universal trivialization $\psi_n\colon \frako/\frakp^n\xrightarrow{\sim}\widetilde{C}_{\frakp,n}^{D}$ to $\psi_n(a\cdot -)$.
There is also an action of $\TT_\frakp(\ZZ_p)$ on $\frakI\frakW_{\frakp,w}^+$, where $a\in \frako_{\frakp}^\times$ sends the universal $\psi_n$-compatible trivialization $\psi\colon \cO_{\frakX_{1,\frakp}^\tor(p^{\underline{n}},\underline{v})}\otimes \frako_{\frakp} \xrightarrow{\sim} \cF_\frakp$ to $\psi((1\otimes a)\cdot -)$.
As $\gamma_w^0$ is equivariant for the $\TT_\frakp(\ZZ_p)$-action, the action of $\frakT_{\frakp,w}^0$ on $\gamma_w^0$ extends to an $\frakT_{\frakp,w}$-action   on 
\[\gamma_w\colon \frakI\frakW_{\frakp,w}^+ \xrightarrow{\gamma_w^0}   \frakX_{1,\frakp}^\tor(p^{\underline{n}},\underline{v}) \rightarrow \frakX_0^\tor(p^{\underline{n}},\underline{v}).\]
Since $\frakX_{1,\frakp}^\tor(p^{\underline{n}},\underline{v}) \rightarrow \frakX_0^\tor(p^{\underline{n}},\underline{v})$ is finite \'etale in the generic fiber, $\gamma_w$ becomes a $\cT_{\frakp,w}$-torsor in the generic fiber.

Let $\cW_\frakp$ be the rigid generic fiber of the formal scheme $\Spf(\cO[\![\TT_\frakp(\ZZ_p)]\!])$.
Let $\cV_\frakp\rightarrow \cW_\frakp$ be an affinoid and 
\[
 \kappa_{\cV_\frakp}\colon \TT_\frakp(\ZZ_p) \rightarrow \cO_{\cV_\frakp}^\times
\]
be the universal character.
We say that $\kappa_{\cV_\frakp}$ is \emph{$n$-analytic} for some $n\in\ZZ_{\geqslant 0}$ if the restriction of $\kappa_{\cV_\frakp}$ to $\frakT_n^0(\ZZ_p) = 1+\frakp^n\frako_{\frakp}$ factors as 
\[
  1+\frakp^n\frako_{\frakp} \xrightarrow{\log_{F_\frakp}} \frakp^n\frako_{\frakp} \rightarrow p\cO_{\cV_\frakp} \xrightarrow{\exp} 1+p\cO_{\cV_\frakp}\subseteq \cO_{\cV_\frakp}^\times,
\]
for some $\ZZ_p$-linear map $\frakp^n\frako_{\frakp} \rightarrow p\cO_{\cV_\frakp}$. 
It is known that $\kappa_{\cV_\frakp}$ is $n$-analytic for large enough $n$ depending on $\cV_\frakp$.
Let $\cO_{\cV_\frakp}^\circ\subseteq \cO_{\cV_\frakp}$ be the power-bounded functions on ${\cV_\frakp}$, and $\frakV_\frakp\defeq \Spf\cO_{\cV_\frakp}^\circ$, which is a formal model of ${\cV_\frakp}$. 
When $\kappa_{\cV_\frakp}$ is $n$-analytic, it extends to a character
\[
 \kappa_{\cV_\frakp} \colon \frakT_{\frakp,w} \times \frakV_\frakp \rightarrow \widehat{\GG}_m \times \frakV_\frakp,
\]
for all $w\geqslant n$.
We also say $\kappa_{\cV_\frakp}$ is \emph{$w$-analytic} if such an extension exists.

Let $w$ be such that $n-1<w\leqslant n-\frac{p^n}{p-1}v_\frakp$.
Let $\kappa_{\cV_\frakp}$ be a $w$-analytic character.
Observe that the pushforward $(\gamma_{w}\times 1_{\frakV_\frakp})_\ast(\cO_{\frakI\frakW_{\frakp,w}^+\times \frakV_\frakp})$ is a sheaf on $\frakX_0^\tor(p^{\underline{n}},\underline{v})\times \frakV_\frakp$ equipped with an action of $\frakT_{\frakp,w}$.
We define the $p$-adic modular sheaf on $\frakX_0^\tor(p^{\underline{n}},\underline{v})\times \frakV_\frakp$,
\[
 \omega_{\frakp,w}^{\kappa_{\cV_\frakp}} \defeq (\gamma_{w}\times 1_{\frakV_\frakp})_\ast(\cO_{\frakI\frakW_{\frakp,w}^+\times \frakV_\frakp})[-\kappa_{\cV_\frakp}],
\]
as the subsheaf on which $\frakT_{\frakp,w}$ acts through the character $-\kappa_{\cV_\frakp} \colon \frakT_{\frakp,w} \times \frakV_\frakp \rightarrow \widehat{\GG}_m \times \frakV_\frakp$.
As $\gamma_w$ is the composition of the $\frakT_{\frakp,w}^0$-torsor $\gamma_w^0$ with a finite map, we know that  $\omega_{\frakp,w}^{\kappa_{\cV_\frakp}}$ is a coherent sheaf, which becomes an invertible sheaf when restricted to the generic fiber.

We now discuss the dependence of the modular sheaf on $w$ and $n$. First, for 
 $n-1<w<w'\leqslant n-\frac{p^n}{p-1}v_\frakp$ the natural  $\frakT^0_{\frakp,w'}$-equivariant map from the $\frakT^0_{\frakp,w}$-torsor $\gamma^0_{w}$ to the $\frakT^0_{\frakp,w'}$-torsor $\gamma^0_{w'}$ yields a canonical  isomorphism of $\omega_{\frakp,w}^{\kappa_{\cV_\frakp}}\simeq\omega_{\frakp,w'}^{\kappa_{\cV_\frakp}}$.
Hence $\omega_{\frakp,w}^{\kappa_{\cV_\frakp}}$ only depends on $n=\lceil w\rceil$ and will be henceforth denoted by $\omega_{\frakp,n}^{\kappa_{\cV_\frakp}}$.
In addition, for $n<n'$, and for any $w'$ such that $n'-1<w'\leqslant n'-\frac{p^{n'}}{p-1}v_\frakp$, the
 canonical $\frakT_{\frakp,w'}$-equivariant map from $\gamma_w$ to $\gamma_{w'}$ induces a map $\omega_{\frakp,n}^{\kappa_{\cV_\frakp}} \rightarrow \omega_{\frakp,n'}^{\kappa_{\cV_\frakp}}$.
As $\gamma_w$ is a torsor when restricted to the generic fiber, we have an isomorphism $\omega_{\frakp,n}^{\kappa_{\cV_\frakp}} \xrightarrow{\sim} \omega_{\frakp,n'}^{\kappa_{\cV_\frakp}}$ over the generic fiber.

Let $\TT = \Res_{\frako/\ZZ}\Gm$ and  $\cW$ be the rigid generic fiber of the formal scheme $\Spf(\cO[\![\TT(\ZZ_p)]\!])$. 
Note that $\cW = \prod_{\frakq\in \Sigma_p}\cW_\frakq$.
Let $I\subseteq \Sigma_p$.
Denote by $\overline{I}$ the complement of $I$ in $\Sigma_p$.
Let $\kappa_{\overline{I}} = \prod_{\frakq\in \overline{I}}\kappa_\frakq$, where $\kappa_\frakq$ is a locally algebraic character on $\TT_{\frakq}(\ZZ_p)$
whose algebraic part is of exponent  $(k_\tau)_{\tau\in\Sigma_\frakq}\in \ZZ^{f_\frakq}$ and whose finite part
$\chi_\frakq$ has conductor $n_\frakq$.
Define the partial weight space
\[ \cW_{I}^{\kappa_{\overline{I}}}\defeq \prod_{\frakp\in I} \cW_\frakp \times \kappa_{\overline{I}} \subseteq \cW.
\]
Let $\cV\rightarrow \cW_{I}^{\kappa_{\overline{I}}}$ be an affinoid and $\kappa_{\cV}\colon \TT(\ZZ_p) \rightarrow \cO_{\cV}^\times$
be the resulting universal character. 
As $\TT(\ZZ_p) = \prod_{\frakq\in\Sigma_p} \TT_\frakq(\ZZ_p)$, there exist $\kappa_{\cV,\frakp}$ for all $\frakp\in I$ so that  $\kappa_\cV=\prod_{\frakp\in I}\kappa_{\cV,\frakp} \cdot \kappa_{\overline{I}}$. 
Assume that $\kappa_\cV$ is $w$-analytic, i.e., each $\kappa_{\cV,\frakp}$ extends to a formal character $\kappa_{\cV,\frakp}\colon\frakT_{\frakp,w} \times \frakV\rightarrow \widehat{\GG}_m\times \frakV$.
Let $\underline{n} = (n_\frakq)\in \ZZ_{\geqslant 0}^{\Sigma_p}$ be such that $n_\frakp=0$, for all $\frakp\in I$, and $n_\frakq$ is the conductor of  the finite part $\chi_\frakq$ of $\kappa_\frakq$, for all $\frakq\in\overline{I}$.
According to the above construction, we have the $p$-adic modular sheaf $\omega_{\frakp,w}^{\kappa_{\cV,\frakp}}$ on $\frakX_0^\tor(p^{\underline{n}},\underline{v})\times \frakV$ for all $\frakp\in I$.
Using the notation \eqref{eq:twist}, we define the modular sheaf
\[
 \omega_w^{\kappa_\cV} \defeq \bigotimes_{\frakp\in I} \omega_{\frakp,w}^{\kappa_{\cV,\frakp}}  \otimes \bigotimes_{\frakq \in \overline{I},\tau\in\Sigma_\frakq} \omega_\tau^{k_\tau}(\chi_\frakq)
\]
on $\frakX_0^\tor(p^{\underline{n}},\underline{v})\times \frakV$. Restricted to the generic fiber $\cX_0^\tor(p^{\underline{n}},\underline{v})$, the sheaf is independent of  the choice of $w$ and will be denoted by  $\omega^{\kappa_\cV}$. The proof of the following  base change property is standard. 
\begin{proposition} \label{prop:sheafbasechange}
Let $f\colon \cV'\rightarrow \cV$ be a morphism of affinoid rigid spaces.
Then over the generic fiber $\cX_0^\tor(p^{\underline{n}},\underline{v})$, we have an isomorphism
$ f^\ast \omega^{\kappa_\cV}\isom \omega^{\kappa_{\cV'}}$. 
\end{proposition}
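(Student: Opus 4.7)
The plan is to verify the asserted isomorphism factor by factor in the definition
\[
\omega^{\kappa_\cV} = \Tensor_{\frakp\in I} \omega_{\frakp,w}^{\kappa_{\cV,\frakp}}  \tensor \Tensor_{\frakq \in \overline{I},\tau\in\Sigma_\frakq} \omega_\tau^{k_\tau}(\chi_\frakq).
\]
Since $f\colon \cV'\to\cV$ is by assumption a morphism over $\cW_I^{\kappa_{\overline I}}$, the algebraic exponents $k_\tau$ and the finite parts $\chi_\frakq$ for $\frakq\in \overline I$ are identical for $\cV$ and $\cV'$, so each tensor factor indexed by $\frakq\in\overline I$ pulls back canonically to itself. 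The problem therefore reduces to showing $f^\ast \omega_{\frakp,w}^{\kappa_{\cV,\frakp}}\isom \omega_{\frakp,w}^{\kappa_{\cV',\frakp}}$ on the generic fiber, for each $\frakp\in I$.

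Over the generic fiber, $\gamma_w$ is a $\cT_{\frakp,w}$-torsor. I would fix an admissible affinoid cover $\{\cU_i\}$ of $\cX_0^\tor(p^{\underline n},\underline v)$ trivializing this torsor (for example, by pulling back to $\cX_{1,\frakp}^\tor(p^{\underline n},\underline v)$ and further trivializing the $\cT_{\frakp,w}^0$-torsor $\gamma_w^0$). Over each $\cU_i\times \cV$, the pushforward $(\gamma_w\times 1_\cV)_\ast \cO$ is then canonically identified with $\cO_{\cU_i}\dtensor \cO(\cT_{\frakp,w})\dtensor \cO_\cV$, with $\cT_{\frakp,w}$ acting on the middle factor by right translation. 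The subsheaf on which this action is $-\kappa_{\cV,\frakp}$ is the free rank one $\cO_{\cU_i}\dtensor \cO_\cV$-module generated by $e_{\kappa_{\cV,\frakp}}^{-1}$, where $e_{\kappa_{\cV,\frakp}}\in \cO(\cT_{\frakp,w})\dtensor \cO_\cV$ is the invertible function corresponding to the universal character $\kappa_{\cV,\frakp}\colon \cT_{\frakp,w}\times\cV\to\Gm$.

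Base changing along $f$ replaces the character by $\kappa_{\cV',\frakp} = \kappa_{\cV,\frakp}\circ(1_{\cT_{\frakp,w}}\times f)$, so the corresponding element becomes $e_{\kappa_{\cV',\frakp}} = (1\dtensor f^\ast)(e_{\kappa_{\cV,\frakp}})$. This yields a canonical local isomorphism $f^\ast \omega_{\frakp,w}^{\kappa_{\cV,\frakp}}|_{\cU_i}\isom \omega_{\frakp,w}^{\kappa_{\cV',\frakp}}|_{\cU_i}$ sending $e_{\kappa_{\cV,\frakp}}^{-1}\tensor 1$ to $e_{\kappa_{\cV',\frakp}}^{-1}$, and these glue on overlaps because the transition cocycles of the torsor act via $-\kappa_{\cV,\frakp}$ and $-\kappa_{\cV',\frakp}$ in a manner manifestly compatible with $f^\ast$. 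No genuine obstacle arises: every piece of data derived from $\cV$ comes from the universal character, which is transparently functorial in the base.
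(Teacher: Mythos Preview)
Your argument is correct and is precisely the ``standard'' verification the paper alludes to: the paper does not actually write out a proof of this proposition, stating only that it ``is standard.'' Your factor-by-factor reduction, together with the local trivialization of the $\cT_{\frakp,w}$-torsor on the generic fiber and the observation that taking $(-\kappa_{\cV,\frakp})$-eigenspaces commutes with base change because the universal character pulls back functorially, is exactly what is meant. One minor remark: the hypothesis that $f$ is a morphism \emph{over} $\cW_I^{\kappa_{\overline I}}$ is implicit in the statement (since $\kappa_{\cV'}$ is defined via the composite $\cV'\to\cV\to\cW_I^{\kappa_{\overline I}}$), so your opening sentence is justified even though the proposition does not say so explicitly.
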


Now we perform descent from $G^\ast$ to $G$.
We have defined $\cW$ as the rigid generic fiber of the formal scheme $\Spf(\cO[\![\TT(\ZZ_p)]\!])$.
It is the weight space for $G^\ast$.
Let $\cW^G$ be the rigid generic fiber of $\Spf(\cO[\![\TT(\ZZ_p)\times\ZZ_p^\times]\!])$, which is the weight space for $G$.
Consider the map of weight spaces 
\[\cW^G\rightarrow\cW, \quad (\nu,w)\mapsto  \nu^2 \cdot (w\circ\Nm_{F/\QQ}), \] 
 extending the map on classical weights defined in \S\ref{HMV}.
Let $\kappa_{\overline{I}}$ be a locally algebraic character on $\prod_{\frakq\in \overline{I}} \TT_{\frakq}(\ZZ_p)$ and 
 $\cW_{I}^{G,\kappa_{\overline{I}}}$ be the preimage of $\cW_{I}^{\kappa_{\overline{I}}}$ under $\cW^G \rightarrow \cW$.
Let $\cU\rightarrow\cW_{I}^{G,\kappa_{\overline{I}}}$ be an affinoid and
\[
(\nu_\cU,w_\cU)\colon \TT(\ZZ_p)\times\ZZ_p^\times \rightarrow \cO_\cU^\times
\]
be the universal character, which we assume to be $w$-analytic.
By composing with $\cW_{I}^{G,\kappa_{\overline{I}}} \rightarrow \cW_{I}^{\kappa_{\overline{I}}}$, we obtain another $w$-analytic
character 
\[
\kappa_\cU=\nu_\cU^2 \cdot (w_\cU\circ \Nm_{F/\QQ}) \colon   \TT(\ZZ_p) \rightarrow \cO_\cU^\times.
\]

Note that the construction of modular sheaves in this section is done in fact over $X_\frakc$ for each fractional ideal $\frakc$ of $F$, so we add the subscript $\frakc$ to the modular sheaves.
Exactly as for the 
classical modular sheaves one performs a twisted 
$\Delta_\frakn$-descent to obtain modular sheaves for $G$:
\[
\omega_{G,\frakc}^{(\nu_\cU,w_\cU)} \defeq \left(\omega_\frakc^{\kappa_\cU}(-\nu_\cU)\right)^{\Delta_{\frakn}}.
\]

\subsection{Families of partially classical cuspforms}
Fix  $I\subseteq \Sigma_p$ and let $\kappa_{\overline{I}}$ be a locally algebraic character on $\prod_{\frakq\in \overline{I}} \TT_{\frakq}(\ZZ_p)$.
Let $\cV\rightarrow \cW_{I}^{\kappa_{\overline{I}}}$ be an affinoid and $\kappa_{\cV}\colon \TT(\ZZ_p) \rightarrow \cO_{\cV}^\times$ be the universal character.
Let $\underline{n} = (n_\frakq)\in \ZZ_{\geqslant 0}^{\Sigma_p}$ be such that $n_\frakp=0$ for all $\frakp\in I$ and $n_\frakq$ is the conductor of the finite part of $\kappa_\frakq$ for all $\frakq\in\overline{I}$.
Let $\underline{v}\in(\QQ\cap[0,1])^{\Sigma_p}$ be such that $ v_\frakp<\frac{1}{p}$  for all $\frakp\in I$ and  $v_\frakq = 1$ for all $\frakq\in\overline{I}$.
Consider the complex of  $\overline{I}$-classical coherent cuspidal cohomologies of Hilbert modular varieties parametrized by $\cV$,
\[
  \RG^{\overline{I}}(\frakc,p^{\underline{n}}, \kappa_\cV,\underline{v})
\defeq 
\RG(\cX_0^\tor(\frakc,p^{\underline{n}},\underline{v})\times \cV, \omega^{\kappa_\cV}(-D)),
\] 
where $D$ is the cuspidal divisor of the toroidal compactification.

The base change property from Proposition~\ref{prop:sheafbasechange} induces specialization maps:
\[
  \RG^{\overline{I}}(\frakc,p^{\underline{n}}, \kappa_\cV,\underline{v})
  \rightarrow   \RG^{\overline{I}}(\frakc,p^{\underline{n}}, \kappa_{\cV'},\underline{v}),
\]
for any morphism of affinoid rigid spaces $\cV' \rightarrow \cV$.
When $\cV' \rightarrow \cV$ is flat, the specialization map is an isomorphism.

Turning our attention from $G^\ast$ to $G$, define
\[
  \RG_G^{\overline{I}}(\frakc,p^{\underline{n}}, (\nu_\cU,w_\cU),\underline{v})
\defeq 
\RG(\cX_0^\tor(\frakc,p^{\underline{n}},\underline{v})/{\Delta_\frakn}\times \cU, \omega_{G,\frakc}^{(\nu_\cU,w_\cU)}(-D)).
\] 

As in the classical case, for $x\in F_+^{\times}$, the isomorphism $L_{\frakc,x}\colon X_{\frakc} \rightarrow X_{x\frakc}$ induces an isomorphism
\[
L_{\frakc,x}^\ast \colon \RG_G^{\overline{I}}(x\frakc,p^{\underline{n}}, (\nu_\cU,w_\cU),\underline{v}) \rightarrow \RG_G^{\overline{I}}(\frakc,p^{\underline{n}},(\nu_\cU,w_\cU),\underline{v}).
\]
Note that $L_{\frakc,x}^\ast$ depends only on the fractional ideal $(x)$ as long as its generator $x$ is chosen as a $p$-adic unit.
Let $\Frac(F)^{(p)}$ be the group of fractional ideals of $F$ prime to $p$.
Let $F_+^{\times,(p)}$ be the group of totally positive elements of $F$ which are $p$-adic units. 
Note that $\Frac(F)^{(p)}/\{(x)\colon x\in F_+^{\times,(p)}\}$ is equal to the strict class group $\Cl$ of $F$.
We then define
\[
\RG_G^{\overline{I}}(p^{\underline{n}},  (\nu_\cU,w_\cU),\underline{v})\defeq
\bigoplus_{\frakc\in \Frac(F)^{(p)}}\RG_G^{\overline{I}}(\frakc,p^{\underline{n}}, (\nu_\cU,w_\cU),\underline{v})/\left(L_{\frakc,x}^\ast - \id\right)_{x\in F_+^{\times,(p)}}.
\]

We also define
\[
   \RG^{\overline{I}}(\frakc, p^{\underline{n}}, \kappa_\cV) \defeq \varinjlim_{v_\frakp\rightarrow 0^+, \frakp\in I}    \RG^{\overline{I}}(\frakc,p^{\underline{n}}, \kappa_\cV,\underline{v}),
\]
\begin{equation}\label{G-families}
  \RG_G^{\overline{I}}( p^{\underline{n}}, (\nu_\cU,w_\cU)) \defeq \varinjlim_{v_\frakp\rightarrow 0^+, \frakp\in I}   \RG_G^{\overline{I}}( p^{\underline{n}}, (\nu_\cU,w_\cU),\underline{v}).
\end{equation}

\subsection{Functoriality}
Let $I\subseteq I' \subseteq \Sigma_p$.
Fix locally algebraic characters $\kappa_{\overline{I'}}$ on $\prod_{\frakq \in \overline{I'} } \frako_{\frakq}^\times$ and $\kappa_{I'\setminus I}$ on $\prod_{\frakq\in I'\setminus I} \frako_{\frakq}^\times$, and let $\kappa_{\overline{I}} = \kappa_{\overline{I'}}\cdot \kappa_{I'\setminus I}$.
Let $\iota_{I'\setminus I}$ be the inclusion of weight spaces $\cW_{I}^{\kappa_{\overline{I}}} \subseteq \cW_{I'}^{\kappa_{\overline{I'}}}$.
Let $\iota_\cV \colon \cV\rightarrow \cW_{I}^{\kappa_{\overline{I}}}$ be an affinoid and $\kappa_{\cV}\colon \TT(\ZZ_p) \rightarrow \cO_{\cV}^\times$ be the universal character.
We then have $\iota_{I'\setminus I}\circ \iota_\cV\colon \cV \rightarrow \cW_{I'}^{\kappa_{\overline{I'}}}$ whose universal character is again denoted by $\kappa_\cV$.

Let $\underline{n} = (n_\frakq), \underline{n}' = (n'_\frakq)\in \ZZ_{\geqslant 0}^{\Sigma_p}$ be such that $n_\frakp=0$ for all $\frakp\in I$, $n_\frakq$ is the conductor of the finite part of $\kappa_\frakq$ for all $\frakq\in\overline{I}$, $n'_\frakp=0$ for all $\frakp\in I'$, and $n'_\frakq$ is the conductor of the finite part of $\kappa_\frakq$ for all $\frakq\in\overline{I}$.
Let $\underline{v},\underline{v}'\in(\QQ\cap[0,1])^{\Sigma_p}$ be such that $v_\frakq = 1$ for all $\frakq\in\overline{I}$, $v'_\frakq = 1$ for all $\frakq\in\overline{I'}$.
Also, assume that $v'_\frakp \leqslant v_\frakp<\frac{1}{p}$ for all $\frakp\in I$ and $v'_\frakq <\frac1{p^{n_\frakq}}$ for all $\frakq\in I'\setminus I$.

Let $\underline{v}'' \in (\QQ\cap [0,1])^{\Sigma_p}$ be such that $v''_\frakq = p^{\max(n_\frakq-1,0)}v'_\frakq$ for all $\frakq\in I'\setminus I$, $v''_\frakq = v'_\frakq = v_\frakq = 1$ for all $\frakq \in \overline{I'}$, and $v''_\frakq = v'_\frakp$ for all $\frakp\in I$.
According to the theory of canonical subgroups (see Proposition~\ref{prop:curve_isom}),  
\[
\cX_0^\tor(\frakc,p^{\underline{n}'},\underline{v}') \isom \cX_0^\tor(\frakc,p^{\underline{n}},\underline{v}'')
\subseteq \cX_0^\tor(\frakc,p^{\underline{n}},\underline{v}).
\]
Hence there is a natural $\Delta_\frakn$-equivariant map,
\begin{align*} 
\RG^{\overline{I}}(\frakc,p^{\underline{n}},\kappa_\cV,\underline{v}) \rightarrow \RG^{\overline{I'}}(\frakc,p^{\underline{n}},\kappa_\cV,\underline{v'}),
\end{align*}
which after taking direct limits induces
\begin{align} \label{eq:functoriality}
\RG^{\overline{I}}(\frakc,p^{\underline{n}},\kappa_\cV) \rightarrow \RG^{\overline{I'}}(\frakc,p^{\underline{n}},\kappa_\cV).
\end{align}

\section{Eigenvariety of partially classical Hilbert modular forms}
\subsection{Hecke operators}
Let $I\subseteq \Sigma_p$. 
Let $\frakl$ be a prime ideal of $\frako$ which is either in $I$, or relatively prime to  $\frakn p$. 
In this section,  we will define the Hecke operators $T_\frakl$ for $\frakl\nmid \frakn p $ and $U_\frakp$ for $\frakl=\frakp\in I$. 
We recall that  $\underline{n} = (n_\frakq) \in \ZZ_{\geqslant 0}^{\Sigma_p}$ is  such that $n_\frakp=0$ for all $\frakp\in I$.
Let  $\underline{v}\in (\QQ\cap [0,1])^{\Sigma_p}$  be such that  $v_\frakp <\frac{1}{p}$ for all  $\frakp\in I $. 
Let   $v'_\frakp = v_\frakp/p$, if  $\frakl=\frakp\in I $, and $v'_\frakq = v_\frakq$, in all other cases.

Define $\cX_0(\frakl,\frakc,p^{\underline{n}},\underline{v})$ to be the moduli space classifying  $(\underline{A},(H_{\frakq})_{n_\frakq\geqslant 1}, H)$, where $(\underline{A}, (H_{\frakq})_{n_\frakq\geqslant 1})\in \cX_0(\frakc, p^{\underline{n}},\underline{v})$ and $H\subseteq A[\frakl]$ is a finite flat isotropic $\frako$-subgroup scheme, which is \'etale-locally isomorphic to $\frako/\frakl$; in addition, if $\frakl=\frakp\in I$, we assume that $H$ is not the  $\frakp$-canonical subgroup  $C_\frakp$ of level $1$, which exists by the assumption that $v_\frakp<\frac{1}{p}$.

We  have a correspondence 
\begin{center}
  \begin{tikzcd}
   &\cX_0(\frakl,\frakc,p^{\underline{n}},\underline{v}) \ar[ld, "p_1"'] \ar[rd, "p_2"] & \\
  \cX_0(\frakc,p^{\underline{n}},\underline{v})&& \cX_0(\frakl\frakc,p^{\underline{n}},\underline{v'}),
  \end{tikzcd}
\end{center}
given by 
\begin{align*}
p_1&\colon  (\underline{A},(H_{\frakq})_{n_\frakq\geqslant 1},H) \mapsto (\underline{A}, (H_{\frakq})_{n_\frakq\geqslant 1}),  \\
p_2&\colon  (\underline{A},(H_{\frakq})_{n_\frakq\geqslant 1},H) \mapsto (\underline{A}/H, (\overline{H}_\frakq)_{n_\frakq\geqslant 1}), 
\end{align*}
where $\overline{H}_\frakq$ is the image of $H_\frakq$ under $A \rightarrow A/H$. 

\begin{proposition} \label{prop:Hecke} \hfill
\begin{enumerate}
\item When $\frakl\nmid \frakn p$, then $p_1$ and $p_2$ are both finite \'etale of degree $1+q_\frakl$. 
\item When $\frakl=\frakp\in I$,  then $p_1$  is \'etale of degree $q_\frakp$, whereas $p_2$ is an isomorphism.
\end{enumerate}
\end{proposition}
\begin{proof} \hfill
\begin{enumerate}
\item When $\frakl\nmid \frakn p$, then $A$ and $A'=A/H$  share the same  partial Hodge heights and partial degrees. 
Hence the statements about $p_1$ and $p_2$ are clear.

\item When $\frakl=\frakp\in I$, then as in (i), $A$ and $A'=A/H$ share the same  partial Hodge heights and partial degrees, except at $\frakp$. The statement about $p_1$ is still clear. To see $p_2$ is an isomorphism, first suppose that $(\underline{A},(H_{\frakq})_{n_\frakq\geqslant 1}, H)\in \cX_0(\frakl,\frakc,p^{\underline{n}},\underline{v})$, and let $C_\frakp$ be the $\frakp$-canonical subgroup of level $1$ of $A$. 
By  \cite[Cor.~3]{Fargues} we have $\deg_\tau ((C_\frakp+H)/H) \geqslant \deg_\tau (C_\frakp)$.
It then follows from 
 \cite[Prop.~3.1.2]{AIP_Siegel} that $C'_\frakp:=(C_\frakp+H)/H=A[\frakp]/H$ is the canonical subgroup of level $1$ of $A'=A/H$. 
 Hence $A'/C'_\frakp=A/A[\frakp]$ and Theorem~\ref{thm:Fargues}(iii) yields
 \begin{align*}
p \sum_{\tau\in\Sigma_\frakp}\Hdg_\tau(A') = \sum_{\tau\in\Sigma_\frakp}\Hdg_\tau(A'/C'_\frakp)= \sum_{\tau\in\Sigma_\frakp}\Hdg_\tau(A),
\end{align*}
implying that  $p_2(\underline{A},(H_{\frakq})_{n_\frakq\geqslant 1}, H) \in \cX_0(\frakl\frakc,p^{\underline{n}},\underline{v'})$.
We also see from the argument that the $p_2$ is an isomorphism, as we can recover $A$ through $A=(A/A[\frakp]) \otimes_{\frako} \frakp=(A'/C'_\frakp) \otimes_{\frako} \frakp$.
\end{enumerate}
\qedhere

\end{proof}

The universal isogeny on $\cX_0(\frakl,\frakc,p^{\underline{n}},\underline{v})$ yields a natural morphism of  sheaves on $\cX_0(\frakl,\frakc,p^{\underline{n}},\underline{v})\times \cV$:
\[ \pi_\frakl\colon p_1^\ast \omega^{\kappa_\cV}_{\frakl\frakc}\xrightarrow\sim p_2^\ast \omega^{\kappa_\cV}_{\frakc}. \]
We then define the Hecke operators $T_\frakl$ for $\frakl \nmid \frakn p$ (resp. $U_\frakp$ for $\frakl=\frakp\in I$) as the composed map
\begin{align*}
\RG^{\overline{I}}(\frakl\frakc, p^{\underline{n}}, \kappa_\cV,\underline{v})&\xrightarrow{\mathrm{res}} 
\RG^{\overline{I}}(\frakl\frakc, p^{\underline{n}}, \kappa_\cV,\underline{v}')
=\RG(\cX_0(\frakl\frakc,p^{\underline{n}},\underline{v'})\times \cV, \omega^{\kappa_\cV}_{\frakl\frakc}(-D))\\
&\xrightarrow{p_2^\ast}  
\RG(\cX_0(\frakl,\frakc,p^{\underline{n}},\underline{v})\times \cV, p_2^\ast\omega^{\kappa_\cV}_{\frakl\frakc}(-D)) \\
&\xrightarrow{(\pi_\frakl^{\ast})^{-1}}
\RG(\cX_0(\frakl,\frakc,p^{\underline{n}},\underline{v})\times \cV, p_1^\ast\omega^{\kappa_\cV}_\frakc(-D)) \\
&\xrightarrow{{\frac1{q_\frakl}}\Tr_{p_1}} \RG^{\overline{I}}(\frakc, p^{\underline{n}}, \kappa_\cV,\underline{v}).
\end{align*}

Let $(A,A^+)$ be a complete Tate algebra over a non-Archimedean field.
We let $\varpi\in A^+$ be a pseudo-uniformizer.
A Banach $A$-module $M$ is a topological $A$-module whose topology can be described as follows: Let $A_0$ be an open and bounded subring of $A$.
Then $M$ contains an open and bounded sub $A_0$-module $M_0$ which is $\varpi$-adically complete and separated (and $M = M_0[1/\varpi]$).
Recall that a morphism $M\rightarrow N$ of Banach $A$-modules is called compact if it is a limit of finite rank operators for the supremum norm of operators.
A morphism $M^\bullet \rightarrow N^\bullet$ of complexes of Banach $A$-modules is called compact if it is compact in each degree.

\begin{lemma} \label{lem:Upcompact}
The operator $U_I=\prod_{\frakp\in I}U_\frakp$ is compact.
\end{lemma}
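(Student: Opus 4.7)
The plan is to establish the compactness of each individual $U_\frakp$ for $\frakp\in I$; compactness of the composition $U_I=\prod_{\frakp\in I}U_\frakp$ then follows since, for morphisms of complexes of Banach modules, compactness in each degree is preserved under composition with bounded maps.

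Fix $\frakp\in I$, so $n_\frakp=0$. The geometric input is Proposition~\ref{prop:Hecke}(2): the map $p_2$ identifies $\cC_\frakp(\underline{v})$ with the strictly smaller locus $\cX_0^\tor(\frakp\frakc,p^{\underline{n}},\underline{v}')$ where $v'_\frakp=v_\frakp/p<v_\frakp$ and $v'_\frakq=v_\frakq$ for $\frakq\neq\frakp$. Consequently the pullback $p_2^\ast$ in the definition of $U_\frakp$ factors through the restriction
\[
\rho\colon\RG^{\overline{I}}(\frakp\frakc,p^{\underline{n}},\kappa_\cV,\underline{v})\longrightarrow\RG^{\overline{I}}(\frakp\frakc,p^{\underline{n}},\kappa_\cV,\underline{v}'),
\]
followed by the tautological isomorphism induced by $p_2$. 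Since the remaining ingredients $\pi_\frakp^{-1}$ and $\tfrac{1}{q_\frakp}\Tr_{p_1}$ are continuous maps of complexes of Banach modules (the latter because $p_1$ is finite \'etale), it suffices to show that $\rho$ is compact in each degree.

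To this end, I would choose a Hecke-stable admissible affinoid cover of $\cX_0^\tor(\frakp\frakc,p^{\underline{n}},\underline{v})$---for instance by pulling back a suitable cover from the flag variety under the Hodge--Tate period map, as indicated in the introduction---whose restriction to $\cX_0^\tor(\frakp\frakc,p^{\underline{n}},\underline{v}')$ remains an admissible cover in which every member is strictly contained in its larger counterpart. Computing the cohomology via the associated \v{C}ech complexes, $\rho$ is represented degree-by-degree by the restriction of bounded analytic functions from a larger to a strictly smaller affinoid, which is a classical compact operator of Banach modules. Composing over $\frakp\in I$ then yields the compactness of $U_I$.

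The main obstacle, in contrast to the classical AIP setting where the partial ordinary locus is affine and one may work entirely in degree zero, is to track compactness through the full \v{C}ech complex. Constructing the affinoid cover compatibly with the Hecke correspondences while respecting the termwise strict containment $\underline{v}'<\underline{v}$ is the technical crux of the approach set out in the introduction.
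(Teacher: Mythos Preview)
Your approach is essentially correct and shares the paper's core idea: factor $U_I$ (or each $U_\frakp$) through the restriction map
\[
\RG(\cX_0^\tor(I\frakc,p^{\underline{n}},\underline{v})\times\cV,\omega^{\kappa_\cV}(-D))\longrightarrow \RG(\cX_0^\tor(I\frakc,p^{\underline{n}},\underline{v}')\times\cV,\omega^{\kappa_\cV}(-D))
\]
coming from Proposition~\ref{prop:Hecke}(2), and then argue that this restriction is compact. The paper treats $U_I$ all at once rather than prime-by-prime (a cosmetic difference), and for the compactness of the restriction it simply invokes \cite[Lemma~2.5.23]{BP}: since the closure of $\cX_0^\tor(I\frakc,p^{\underline{n}},\underline{v}')$ lies in $\cX_0^\tor(I\frakc,p^{\underline{n}},\underline{v})$, that lemma gives compactness of the restriction on derived sections of any coherent sheaf. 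Your \v{C}ech sketch is, in effect, what underlies that lemma.

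Two points of overreach in your write-up are worth flagging. First, there is no need for the affinoid cover to be \emph{Hecke-stable}: for compactness of the restriction map any finite affinoid cover of $\cX_0^\tor(I\frakc,p^{\underline{n}},\underline{v})$ whose members pull back to relatively compact affinoids in $\cX_0^\tor(I\frakc,p^{\underline{n}},\underline{v}')$ will do. Second, the Hodge--Tate period-map cover you invoke is not used here at all; it enters only later, in \S4.2--4.3, where one needs a complex of \emph{projective} Banach modules with a compact endomorphism in order to apply the Boxer--Pilloni slope-decomposition machinery (Proposition~\ref{prop:slope_decomp}). You are conflating the (easy) compactness statement of this lemma with the (harder) structural input needed for slope decompositions.
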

\begin{proof}
We write $I\frakc$ for the fractional ideal $(\prod_{\frakp\in I} \frakp) \frakc$.
By Proposition~\ref{prop:Hecke}, $U_I$ factors through the restriction of domain
\begin{align*}
\RG(\cX_0(I\frakc,p^{\underline{n}},\underline{v})\times \cV, \omega^{\kappa_\cV}_{I\frakc}(-D))
\rightarrow 
\RG(\cX_0(I\frakc,p^{\underline{n}},\underline{v'})\times \cV, \omega^{\kappa_\cV}_{I\frakc}(-D)),
\end{align*}
where $v'_\frakp = v_\frakp/p$ for $\frakp\in I$ and $v'_\frakq = v_\frakq$ for $\frakq\notin I$. Since the closure of $\cX_0(I\frakc,p^{\underline{n}},\underline{v'})$ is contained in $\cX_0(I\frakc,p^{\underline{n}},\underline{v})$, the restriction map is compact by \cite[Lemma~2.5.23]{BP}.
Hence $U_I$ is also compact.
\end{proof}

Now we show how the Hecke operators for $G^\ast$ yield  Hecke endomorphisms for $G$.

The Hecke operator $T_\frakl$  induces an endomorphism of $\RG_G^{\overline{I}}(p^{\underline{n}},(\nu_\cU,w_\cU),\underline{v})$
defined in \eqref{G-families}. 
To define an  endomorphism $U_\frakp$, we need a modification.
For a prime $\frakp\in \Sigma_p$ we choose $x_\frakp\in F_+^{\times}$ such that $\val_\frakp(x_{\frakp})=1$ and $\val_\frakq(x_{\frakp})=0$ for all $\frakq\mid p$ but $\frakq\neq \frakp$.
We can then identify $\RG_G^{\overline{I}}(\frakp\frakc,p^{\underline{n}},(\nu_\cU,w_\cU),\underline{v})$ with $\RG_G^{\overline{I}}(x_{\frakp}^{-1}\frakp\frakc,p^{\underline{n}},(\nu_\cU,w_\cU),\underline{v})$ allowing us to define an endomorphism $U_\frakp$ of 
$\RG_G^{\overline{I}}(p^{\underline{n}},(\nu_\cU,w_\cU),\underline{v})$, which nevertheless depends on  the choice of $x_\frakp$.

\subsection{Admissible covering of the partial ordinary locus}
Let $B$ denote the standard Borel subgroup of $G$ over $\QQ_p$.  The parabolic subgroups of $G$ containing $B$ are exactly the 
\[P_I=\prod_{\frakp\in I} B(F_\frakp)\cdot \prod_{\frakp\in \overline{I}} \GL_2(F_\frakp),\]
where $I$ runs over the subsets of  $\Sigma_p$. Note that $P_{\varnothing}= G$ while $P_{\Sigma_p}= B$. 
The Weyl group $W_I$ of the Levi factor of  $P_I$ can be identified with $\{1,w_0\}^{\overline{I}}$, where $w_0$ denotes the non-trivial Weyl element for $\mathrm{GL}(2)$, and hence the set ${}^IW$ of minimal length coset representatives of  $W_I\backslash W$ can be identified with $\{1,w_0\}^{I}$. For $J\subseteq \Sigma_p$, define $w_J\in W_G$ as the element which is $w_0$ at exactly the factors corresponding to $J$. The length of $w_J$ equals $|J|$. 
Then $W_I=\{w_J\colon J\subseteq \overline{I}\}$ and ${}^I W = \{w_J\colon J\subseteq I\}$.

Recall the Bruhat decomposition of the flag variety 
\[\mathrm{FL} =B\backslash G=\coprod_{J\subseteq \Sigma_p}C_J\]
into cells $C_J\defeq B\backslash B w_J B$. 
Let $S_I$ be the Schubert variety which is the Zariski closure of $C_I$, namely $S_I = \coprod_{J\subseteq I} C_J$.
Note that   $C_J\isom (\AA^1)^J$ and $S_J \isom (\PP^1)^J$.
More precisely, consider the root system $\Phi=\coprod_{\frakp\in\Sigma_p}\{\alpha_\frakp, -\alpha_\frakp\}$ of $G$, where $\alpha_\frakp$ is the positive root corresponding to the $\frakp$-copy of $\GL_2$. 
The set of positive roots is $\Phi^+ = \{\alpha_\frakp\colon \frakp\in\Sigma_p\}$ and the set of negative roots is $\Phi^- = \{-\alpha_\frakp\colon \frakp\in\Sigma_p\}$.
For each root $\alpha$ of $G$, let $U_\alpha \isom \AA^1$  be the corresponding one parameter subgroup.
Then there is an isomorphism of schemes (see, for example,  \cite[Lem.~3.1.3]{BP})
\[
\prod_{\alpha\in w_I^{-1} \Phi^- \cap \Phi^+} {U_\alpha} = \prod_{\frakp\in I} {U_{\alpha_\frakp}} \xrightarrow\sim C_{I}, \quad (u_\alpha) \mapsto w_I\prod u_\alpha.
\]

Let $T_I \defeq C_\Sigma \cdot w_\Sigma^{-1}w_I$ be the twisted open cell, which contains $C_I$.
Then we also have 
\[
\prod_{\frakp\in I} U_{\alpha_\frakp}\times \prod_{\frakq\in \overline{I}} {U_{-\alpha_\frakq}} \xrightarrow\sim T_{I}, \quad (u_\alpha) \mapsto w_I\prod u_\alpha.
\]
 
Let $\overline{C}_I$ be the reduction of $C_I$ modulo $p$.
Let $]\overline{C}_I[\subseteq \cF\cL$ be the tube.
Since $C_I\isom (\AA^1)^{I}$, $]\overline{C}_I[$ is isomorphic to the product of $|I|$ closed unit balls and $|\overline{I}|$ open unit balls.
More precisely, for each $\alpha\in\Phi$,
let $\cU_\alpha$ be the rigid generic fiber associated with the integral model of $U_\alpha$; namely $\cU_\alpha$ is isomorphic to the closed unit ball.
Let $\cU_\alpha^\circ$ be the tube of the identity element in $\cU_\alpha$, which is isomorphic to the open unit ball. 
By \cite[Cor.~3.3.5]{BP} there is an isomorphism of analytic spaces
\[
\prod_{\alpha\in w_I^{-1} \Phi^- \cap \Phi^+} {\cU_\alpha} \times \prod_{\alpha\in w_I^{-1} \Phi^- \cap \Phi^-} \cU_\alpha^\circ
=\prod_{\frakp\in I} {\cU_{\alpha_\frakp}} \times \prod_{\frakq\in \overline{I}} \cU_{-\alpha_\frakq}^\circ
\xrightarrow\sim ]\overline{C}_{I}[, \quad (u_\alpha) \mapsto w_I\prod u_\alpha.
\]
Let $\cU_\alpha^\an$ be the rigid analytic space associated to $U_\alpha$, namely $\cU_\alpha^\an$ is isomorphic to the analytic affine line. Then $\cT_I^\an$ contains $]\overline{C}_{I}[$ and 
\[
\prod_{\frakp\in I} \cU_{\alpha_\frakp}^\an \times \prod_{\frakq\in \overline{I}} \cU_{-\alpha_\frakq}^\an  \xrightarrow\sim \cT_I^\an, (u_\alpha) \mapsto w_I\prod u_\alpha.
\]

For $\ell\in\ZZ$ and $\alpha\in\Phi$, let $\cU_{\alpha,\ell}\subseteq \cU_{\alpha}^\an$ be the closed ball of radius $p^{-\ell}$, so $\cU_{\alpha,0} = \cU_\alpha$.
Let $J\subseteq I$ be two subsets of $\Sigma_p$.
Define $]\overline{C}_J[_{I,\ell,m}$ to be the image of 
\[\prod_{\frakp\in J} \cU_{\alpha_\frakp,\ell} \times \prod_{\frakq\in I\setminus J} \cU_{-\alpha_\frakq,\ell} \times \prod_{\frakq\in \overline{I}} \cU_{-\alpha_\frakq,m}\]
in $\cT_J^{\an}$.
For example, we have $]\overline{C}_{\Sigma_p}[_{\Sigma_p,0,0} = ]\overline{C}_{\Sigma_p}[$ and $]\overline{C}_J[_{I,0,0} \supseteq ]\overline{C}_{J}[$.
We observe that for any $\ell<0<m$,  $\{]\overline{C}_J[_{I,\ell,m}\}_{J\subseteq I}$ form an admissible affinoid covering of $]\overline{S}_I[_m$, a strict neighborhood of $\cS_I$.

Let $X_\infty\rightarrow X$ be the Hilbert modular variety of $p^\infty$-level. 
Choosing  a toroidal compactification $X_\infty^\tor\rightarrow X^\tor$ and the minimal compactification $X_\infty^\ast \rightarrow X^\ast$, we have the Hodge--Tate period morphisms
\begin{center}
\begin{tikzcd}
  \cX_\infty^\tor \ar[d, "\pi"'] \ar[rd, "\pi_{HT}^\tor"] & \\
\cX_\infty^\ast \ar[r,"\pi_{HT}"]& \cF\cL
\end{tikzcd}
\end{center}
 between perfectoid spaces. The two morphisms $\pi_{\HT}$ and $\pi_{\HT}^\tor$ are Hecke equivariant, and agree on the open subset $\cX_\infty^{\an}$. Moreover, $\pi_\HT$ is affine.
Note that $\pi_{\HT}^{-1}(\cC_{\varnothing})$ is the canonical ordinary locus $\cX^\ast_\infty(\underline{0})$. 
More generally, $\pi_{\HT}^{-1}(\cS_{\overline{I}})$ is the $I$-canonical $I$-ordinary locus $\cX_I^\ast \defeq \cX_\infty^\ast(\underline{v})$, where $v_\frakp=0$ for $\frakp\in I$ and $v_\frakq=1$ for $\frakq\in \overline{I}$.

\begin{proposition}
For any $\ell<0<m$,  $\{ \pi_{\HT}^{-1}( ]\overline{C}_{\overline{J}}[_{\overline{I},\ell,m}) \colon J\supseteq I \}$ is an admissible affinoid covering of $\pi_{\HT}^{-1}(]\overline{S}_{\overline{I}}[_m)$, a strict neighborhood of the $I$-canonical $I$-ordinary locus $\cX_I^\ast$.
\end{proposition}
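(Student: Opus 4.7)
The plan is to reduce the statement to the analogous admissible covering statement on the flag variety, which was already recorded just above the proposition, and then pull back along $\pi_{\HT}$. First I would note that the indexing condition $J\supseteq I$ is the same as $\overline{J}\subseteq \overline{I}$, so by the covering on $\cF\cL$ already established, $\{\,]\overline{C_{\overline{J}}}[_{\overline{I},\ell,m}\,\}_{\overline{J}\subseteq\overline{I}}$ is an admissible affinoid covering of $]\overline{S_{\overline{I}}}[_m$, and the latter is a strict neighborhood of $\cS_{\overline{I}}$ in $\cF\cL$.

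Next I would invoke the fact, stated just before the proposition, that $\pi_{\HT}\colon \cX_\infty^\ast\to \cF\cL$ is affine. This has two consequences that I would use in tandem. On the one hand, the preimage of any affinoid open of $\cF\cL$ is affinoid perfectoid in $\cX_\infty^\ast$, so each $\pi_{\HT}^{-1}(]\overline{C_{\overline{J}}}[_{\overline{I},\ell,m})$ is affinoid (perfectoid). On the other hand, the preimage of an admissible open covering under any morphism of adic spaces is admissible, so the family $\{\pi_{\HT}^{-1}(]\overline{C_{\overline{J}}}[_{\overline{I},\ell,m})\}_{J\supseteq I}$ is an admissible covering of $\pi_{\HT}^{-1}(]\overline{S_{\overline{I}}}[_m)$.

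It remains to check that $\pi_{\HT}^{-1}(]\overline{S_{\overline{I}}}[_m)$ is a strict neighborhood of $\cX_I^\ast=\pi_{\HT}^{-1}(\cS_{\overline{I}})$. Since tubes and their strict neighborhoods are compatible with continuous morphisms of adic/rigid spaces, this is formal from the corresponding property downstairs: any finite admissible affinoid covering of $]\overline{S_{\overline{I}}}[_m$ pulls back to one of $\pi_{\HT}^{-1}(]\overline{S_{\overline{I}}}[_m)$, and overconvergence of the defining inequalities is preserved.

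The only nontrivial inputs are the affineness of $\pi_{\HT}$ and the Bruhat-style covering on the flag variety, both of which are stated or established earlier; the proof is therefore essentially a pullback argument with no serious obstacle. If there is any subtlety to watch, it is the interplay between the minimal compactification $\cX_\infty^\ast$ (a perfectoid space) and its rigid-analytic ``affinoid'' structure, but since $\pi_{\HT}$ is affine the notion of affinoid perfectoid preimage is unambiguous and compatible with admissibility in the sense used in the paper.
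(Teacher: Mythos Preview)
Your proposal is correct and follows essentially the same approach as the paper: the paper's proof simply invokes the affineness of $\pi_{\HT}$ together with the already-established fact that $\{\,]\overline{C_{\overline{J}}}[_{\overline{I},\ell,m}\,\}_{J\supseteq I}$ is an admissible affinoid covering of $]\overline{S_{\overline{I}}}[_m$. Your version supplies a bit more detail (the re-indexing $J\supseteq I \Leftrightarrow \overline{J}\subseteq\overline{I}$, the preservation of strict neighborhoods under pullback), but the two arguments are the same in substance.
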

\begin{proof}
This is due to the fact that $\pi_\HT$ is affine and the observation in the paragraphs above that for any $\ell<0<m$, $\{]\overline{C}_{\overline{J}}[_{\overline{I},\ell,m}\}_{J\supseteq I}$ form an admissible affinoid covering of $]\overline{S}_{\overline{I}}[_m$, a strict neighborhood of $\cS_{\overline{I}}$.
\end{proof}

Let $\underline{n}\in \ZZ_{\geq0}^{\Sigma_p}$. 
Taking quotient by $K_0(p^{\underline{n}}) \defeq \prod_{\frakp\in\Sigma_p} K_0(\frakp^{n_\frakp}) 
\subseteq\prod_{\frakp\in \Sigma_p}\GL_2(\frako_{\frakp})$, we have the truncated Hodge--Tate period map
\[
\pi_{\HT, K_0(p^{\underline{n}})}\colon \cX_0^\ast(\frakc,p^{\underline{n}}) \rightarrow   \cF\cL/K_0(p^{\underline{n}}),
\]
which is a continuous Hecke-equivariant map between topological spaces.

Let $I\subseteq \Sigma_p$ and $m\in\ZZ_{\geqslant 1}$.
Assume that $\underline{n}$ is such that $n_\frakp = m$ for all $\frakp\in I$.
Then for any $J\supseteq I$, $]\overline{C}_{\overline{J}}[_{\overline{I},\ell,m}$ is invariant under $K_0(p^{\underline{n}})$. 
For $\ell\in \ZZ$ the space 
\[\cY_{\overline{J},\ell,m} \defeq \pi^{-1}_{\HT, K_0(p^{\underline{n}})}(]\overline{C}_{\overline{J}}[_{\overline{I},\ell,m}K_0(p^{\underline{n}}))\] 
is an affinoid open of $\cX_0^\ast(p^{\underline{n}})$.
If $\ell<0$, we then have an admissible affinoid covering $\{\cY_{\overline{J},\ell,m}\colon J\supseteq I\}$ of $\pi_{\HT, K_0(p^{\underline{n}})}^{-1}(]\overline{S}_{\overline{I}}[_m K_0(p^{\underline{n}}))$ and  it is easy to see 
from the definition that  the intersection of any number of them
is still an affinoid
\[
\cY_{\overline{J}_1,\ldots,\overline{J}_r,\ell,m} \defeq
\cY_{\overline{J}_1,\ell,m}\cap \cdots \cap \cY_{\overline{J}_r,\ell,m}.
\]

We also have the map 
\[ \pi_{\HT,K_0(p^{\underline{n}})}^\tor \colon \cX^\tor_0(\frakc,p^{\underline{n}}) \rightarrow \cX_0^\ast(\frakc,p^{\underline{n}}) \xrightarrow{\pi_{\HT,K_0(p^{\underline{n}})}} \cF\cL/K_0(p^{\underline{n}}),\]
and we consider  $\cY_{\overline{J},\ell,m}^{\tor} \defeq \pi^{\tor,-1}_{\HT, K_0(p^{\underline{n}})}(]\overline{C}_{\overline{J}}[_{\overline{I},\ell,m}K_0(p^{\underline{n}}))$
and more generally $\cY^{\tor}_{\overline{J}_1,\ldots,\overline{J}_r,\ell,m}$.

Let $\cV\rightarrow \cW_{I}^{\kappa_{\overline{I}}}\defeq \prod_{\frakp\in I} \cW_\frakp \times \kappa_{\overline{I}} \subseteq \cW$ be an affinoid such that the universal character $\kappa_{\cV}$ is $m$-analytic and 
consider $\RG(\pi_{\HT, K_0(p^{\underline{n}})}^{\tor, -1}(]\overline{S}_{\overline{I}}[_m K_0(p^{\underline{n}}))\times \cV, \omega^{\kappa_\cV}(-D))$.

\begin{lemma} \label{lem:projective}
For  $\cY^{\tor}=\cY^{\tor}_{\overline{J}_1,\ldots,\overline{J}_r,\ell,m}$, 
the Banach $\cO(\cV)$-module 
$\rH^0(\cY^{\tor}\times \cV, \omega_m^{\kappa_\cV}(-D))$ is  projective. 
\end{lemma}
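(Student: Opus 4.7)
The plan is to exhibit $M \defeq \rH^0(\cY^\tor \times \cV, \omega_m^{\kappa_\cV}(-D))$ as a topological direct summand of an orthonormalizable Banach $\cO(\cV)$-module. The key observation is that on the generic fiber, the sheaf $\omega_m^{\kappa_\cV}$ is invertible (by the torsor construction with $\gamma_w$), so one can leverage Kiehl's theorem to control $M$ as a module over the affinoid algebra of $\cY^\tor \times \cV$, and then use orthonormalizability of $\cO(\cY^\tor)$ over $L$ to relativize to $\cO(\cV)$.

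First I would observe that $\cY^\tor \times \cV$ is an affinoid, and that the sheaf $\omega_m^{\kappa_\cV}(-D)$ is a coherent sheaf on it that is invertible: the $p$-adic factors $\omega_{\frakp,m}^{\kappa_{\cV,\frakp}}$ are invertible on the generic fiber by construction, the classical factors on $\overline{I}$ are invertible, and $\cO(-D)$ is invertible since the toroidal boundary $D$ is Cartier. By Kiehl's theorem, $M$ is a finitely generated $\cO(\cY^\tor \times \cV)$-module; being locally free of rank one over a Noetherian ring, it is finitely generated projective, and hence there is a continuous $\cO(\cY^\tor \times \cV)$-linear idempotent realizing $M$ as a topological direct summand of $\cO(\cY^\tor \times \cV)^n$ for some $n$.

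Second, I would show that $\cO(\cY^\tor \times \cV) = \cO(\cY^\tor) \,\hat\otimes_L\, \cO(\cV)$ is orthonormalizable as a Banach $\cO(\cV)$-module; it suffices to exhibit a Schauder basis of $\cO(\cY^\tor)$ over $L$. The affinoid $\cY^\tor$ is defined as the preimage under the truncated Hodge--Tate period map of a rational affinoid of the form $]\overline{C_{\overline{J_1}}}[_{\overline{I},\ell,m}K_0(p^{\underline{n}}) \cap \cdots$ inside $\cF\cL/K_0(p^{\underline{n}})$, and near the toroidal boundary it is cut out by rational inequalities on the partial Hodge heights and partial degrees studied in Section~\ref{sec:Hodge}. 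As in the analogous Siegel and Hilbert constructions of \cite{AIP_Siegel,AIP16} and the general framework of \cite[\S2.5]{BP}, this comes from a flat formal model $\frakY^\tor$ over $\cO$, and the $p$-adically complete flat $\cO$-algebra $\cO(\frakY^\tor)$ provides the desired orthonormal basis for $\cO(\cY^\tor) = \cO(\frakY^\tor)[\tfrac{1}{p}]$ over $L$.

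Combining the two steps, $M$ is a topological direct summand of $\cO(\cY^\tor \times \cV)^n$, an orthonormalizable Banach $\cO(\cV)$-module, hence $M$ is itself projective. The principal obstacle will be the careful choice of a flat formal model of $\cY^\tor$ near the toroidal boundary; this is exactly the place where one needs the preimages $\cY^\tor_{\overline{J_1},\ldots,\overline{J_r},\ell,m}$ of rational opens on $\cF\cL$ rather than arbitrary affinoids, since the intersection $\pi_{\HT,K_0(p^{\underline{n}})}^{\tor,-1}$ of such opens with the formal scheme $\frakX^\tor$ admits a flat formal model controlled explicitly by the canonical subgroup theory and the Hodge--Tate map.
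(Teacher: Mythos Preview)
Your proposal has a genuine gap at the very first step: the space $\cY^\tor$ is in general \emph{not} an affinoid. Recall that $\cY^\tor_{\overline{J_1},\ldots,\overline{J_r},\ell,m}$ is defined as the preimage under $\pi^\tor_{\HT,K_0(p^{\underline{n}})}$, which factors through the projection $\pi\colon \cX_0^\tor(\frakc,p^{\underline{n}})\to \cX_0^\ast(\frakc,p^{\underline{n}})$ from the toroidal to the minimal compactification. The map $\pi_{\HT}$ from the \emph{minimal} compactification is affine, so the image $\cY$ of $\cY^\tor$ in $\cX_0^\ast$ is indeed affinoid, but $\pi$ itself is only proper, not affine: for $g=[F:\QQ]>1$ its fibres over the cusps are positive-dimensional toric varieties. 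Hence $\cY^\tor=\pi^{-1}(\cY)$ contains proper subvarieties of positive dimension and cannot be affinoid, so Kiehl's theorem does not apply directly and your identification $\cO(\cY^\tor\times\cV)=\cO(\cY^\tor)\,\hat\otimes_L\,\cO(\cV)$ as an orthonormalizable module has no content.

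The paper gets around this non-affinoidness by a different mechanism. One first invokes the vanishing $R^q(\pi\times 1_\cV)_\ast\,\omega^{\kappa_\cV}(-D)=0$ for $q>0$, a nontrivial input coming from the explicit description of the formal completion along the cuspidal divisor (\cite[\S3.6]{AIP16}, \cite[Thm.~8.2.1.3]{Lan}); the twist by $-D$ is essential here. Since $\cY$ is affinoid this forces $\rH^q(\cY^\tor\times\cV,\omega_m^{\kappa_\cV}(-D))=0$ for $q>0$, so the augmented \v{C}ech complex for a genuine affinoid covering of $\cY^\tor$ (trivializing the invertible sheaf) is exact. Each term of that \v{C}ech complex is orthonormalizable over $\cO(\cV)$, and now Buzzard's Open Mapping Theorem argument shows that the degree-zero cohomology, being the kernel in an exact complex of orthonormalizable modules, is projective. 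Your idea of exhibiting $M$ as a direct summand of something orthonormalizable is morally the right target, but the route to it has to pass through this higher-direct-image vanishing rather than through affinoidness of $\cY^\tor$ itself.
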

\begin{proof}
As the image $\cY$ of $\cY^{\tor}$ in $\cX_0^\ast(\frakc,p^{\underline{n}},\underline{v})$ is an affinoid, a delicate argument of Andreatta, Iovita, and Pilloni \cite[\S3.6]{AIP16}  using the explicit description of the formal completion along the cuspidal divisor shows that 
\[R^q\left(\pi\times \mathbbm{1}_{\cV} \right)_* \omega^{\kappa_\cV}(-D)=0, \text{ for all } q\geqslant 1,\]
where $\pi\times \mathbf{1}_{\cV}\colon \cX_0^\tor(\frakc,p^{\underline{n}},\underline{v})\times \cV \rightarrow \cX_0^\ast(\frakc,p^{\underline{n}},\underline{v})\times \cV $ is the natural projection (see also \cite[Thm.~8.2.1.3]{Lan}). 
It follows that the augmented  \v{C}ech complex, attached to 
an admissible affinoid covering of $\cY^{\tor}$ trivializing the 
invertible sheaf $\omega_m^{\kappa_\cV}(-D)$, is exact. 

Each term of  the complex being an orthonormalizable 
Banach $\cO(\cV)$-module, it is a consequence of the Open Mapping Theorem,  as explained in  \cite{Buz07}, that $\rH^0(\cY^{\tor}\times \cV, \omega_m^{\kappa_\cV}(-D))$ is  projective. 
\end{proof}

By the lemma $\RG(\cY^{\tor}_{\overline{J}_1,\ldots,\overline{J}_r,\ell,m}\times \cV, \omega_m^{\kappa_\cV}(-D))$ are concentrated in degree $0$, so the \v{C}ech complex
\[
\cC_{\overline{I}}^{\bullet}(\frakc,p^{\underline{n}}, \kappa_\cV, m)\colon 
\bigoplus_{J\supseteq I} \rH^0(\cY^{\tor}_{\overline{J},\ell,m}\times \cV, \omega_m^{\kappa_\cV}(-D)) \rightarrow
\bigoplus_{J_1,J_2\supseteq I}\rH^0(\cY^{\tor}_{\overline{J}_1,\overline{J}_2,\ell,m} \times \cV, \omega_m^{\kappa_\cV}(-D)) \rightarrow \cdots
\]
 is Hecke-equivariantly quasi-isomorphic to $\RG(\pi_{\HT, K_0(p^{\underline{n}})}^{\tor,-1}(]\overline{S}_{\overline{I}}[_m K_0(p^{\underline{n}}))\times \cV, \omega^{\kappa_\cV}(-D))$.

Recall that $\RG^{\overline{I}}(\frakc,p^{\underline{n}}, \kappa_\cV, \underline{v}) \defeq  \RG(\cX_0^\tor(\frakc,p^{\underline{n}},\underline{v})\times\cV,\omega^{\kappa_\cV}(-D))$. 
As  $\left\{\cX_0^\tor(\frakc,p^{\underline{n}},\underline{v})\right\}_{\underline{v}}$, where $\underline{v}\in(\QQ\cap[0,1])^{\Sigma_p}$ is such that $v_\frakq =1$ for all $\frakq\in \overline{I}$, and 
$\left\{\pi_{\HT, K_0(p^{\underline{n}})}^{\tor,-1}(]\overline{S}_{\overline{I}}[_m K_0(p^{\underline{n}}))\right\}_m$ 
   are cofinal,  we conclude that $\RG^{\overline{I}}(\frakc,p^{\underline{n}}, \kappa_\cV)
\defeq \varinjlim_v
\RG^{\overline{I}}(\frakc,p^{\underline{n}}, \kappa_\cV, \underline{v})$
is quasi-isomorphic to $\cC_{\overline{I}}^{\bullet}(\frakc,p^{\underline{n}}, \kappa_\cV)\defeq \varinjlim_m\cC_{\overline{I}}^{\bullet}(\frakc,p^{\underline{n}}, \kappa_\cV, m)$.
The quasi-isomorphism is Hecke-equivariant in the sense that the following diagram commutes for all Hecke operators $T_\frakl$ (including $U_\frakp$):
\begin{center}
\begin{tikzcd}
\RG^{\overline{I}}(\frakl\frakc,p^{\underline{n}}, \kappa_\cV) \ar[r, "\sim"] \ar[d,"T_\frakl"'] & \cC_{\overline{I}}^{\bullet}(\frakl\frakc,p^{\underline{n}}, \kappa_\cV) \ar[d,"T_\frakl"] \\
\RG^{\overline{I}}(\frakc,p^{\underline{n}}, \kappa_\cV) \ar[r, "\sim"] & \cC_{\overline{I}}^{\bullet}(\frakc,p^{\underline{n}}, \kappa_\cV).
\end{tikzcd}
\end{center}

We now turn our attention from $G^\ast$ to $G$.
Let  
\[\cC_{\overline{I},G}^{\bullet}(\frakc,p^{\underline{n}}, (\nu_\cU,w_\cU),m):=\left(\cC_{\overline{I}}^{\bullet}(\frakc,p^{\underline{n}}, \kappa_\cU,m)(-\nu_\cU)\right)^{\Delta_\frakn}\] be the complex of 
$\Delta_\frakn$-invariant of the $\nu_\cU$-twist of 
$\cC_{\overline{I}}^{\bullet}(\frakc,p^{\underline{n}}, \kappa_\cU,m)$. 
For $x\in F_+^{\times,(p)}$, the isomorphism $L_{\frakc,x}\colon X_{\frakc} \rightarrow X_{x\frakc}$ induces
\[
L_{\frakc,x}^\ast \colon \cC_{\overline{I},G}^{\bullet}(x\frakc,p^{\underline{n}}, (\nu_\cU,w_\cU),m) \rightarrow \cC_{\overline{I},G}^{\bullet}(\frakc,p^{\underline{n}},(\nu_\cU,w_\cU),m),
\]
which depends only on the fractional ideal $(x)$ but not on the generator $x$.
Define
\[
\cC_{\overline{I},G}^{\bullet}(p^{\underline{n}},  (\nu_\cU,w_\cU),m)\defeq
\bigoplus_{\frakc\in \Frac(F)^{(p)}}\cC_{\overline{I},G}^{\bullet}(\frakc,p^{\underline{n}}, (\nu_\cU,w_\cU),m)/\left(L_{\frakc,x}^\ast - \id\right)_{x\in F_+^{\times,(p)}}.
\] 
Again, 
$\RG_G^{\overline{I}}(p^{\underline{n}}, (\nu_\cU,w_\cU))$
is Hecke-equivariantly quasi-isomorphic to 
\[\cC_{\overline{I},G}^{\bullet}(p^{\underline{n}}, (\nu_\cU,w_\cU))\defeq \varinjlim_m\cC_{\overline{I},G}^{\bullet}(p^{\underline{n}}, (\nu_\cU,w_\cU), m).\]

\subsection{Slope decomposition}
Let $(A,A^+)$ be a complete Tate algebra over a non-Archimedean field.
Let $M$ be an $A$-module and let $T$ be an $A$-linear endomorphism of $M$.

\begin{definition} \label{def:hslopedecomp}
Let $h\in\QQ$. An \emph{$h$-slope decomposition of $M$ with respect to $T$} is a direct sum decomposition of $A$-modules $M=M^{\leqslant h}\oplus M^{>h}$
such that 
\begin{enumerate}
    \item $M=M^{\leqslant h}$ and $M^{>h}$ are stable under the $T$-action,
    \item $M^{\leqslant h}$ is a finite $A$-module,
    \item There is a unitary polynomial $Q\in A[X]$ with slope $\leqslant h$ such that $Q^\ast(T)$ is zero on $M^{\leqslant h}$,
    \item For any unitary polynomial $Q\in A[X]$ with slope $\leqslant h$, the restriction of $Q^\ast(T)$ to $M^{> h}$ is an invertible endomorphism.
\end{enumerate}
\end{definition}

Let $S=\Spa (A, A^+)$ and $\cM$ be a sheaf of $\cO_S$-modules.
Let $T\in \End_{\cO_S}(\cM)$.
\begin{definition}
We say that \emph{$\cM$ has a slope decomposition with respect to $T$} if for any $x\in S$ and $h\in \QQ$, there exists an affinoid neighborhood $U$ of $x$ in $S$, $h'\geqslant h$, and a $T$-stable decomposition of $\cO_U$-modules
\[
\left.\cM \right|_U = (\left.\cM \right|_U)^{\leqslant h'}\oplus (\left.\cM \right|_U)^{>h'},
\]
where $(\left.\cM \right|_U)^{\leqslant h'}$ is a coherent sheaf of $\cO_U$-modules and for any affinoid open $V = \Spa (B,B^+)\subseteq U$, 
\[
\cM(V) = (\left.\cM \right|_U)^{\leqslant h'}(V)\oplus (\left.\cM \right|_U)^{>h'}(V)
\]
is an $h'$-slope decomposition of the $B$-module $\cM(V)$.
\end{definition}

Suppose that $\cM$ has a slope decomposition with respect to $T$.
A section $s\in \cM(U)$ is said to \emph{have infinite slope} if for any affinoid open $V\subseteq U$ and $h\in\QQ$ such that $\cM(V)$ admits an $h$-slope decomposition, $\left. s\right|_V \in \cM(V)^{>h}$.
The infinite slope sections form a subsheaf $\cM^\infty\subseteq \cM$.
The \emph{finite slope part of $\cM$} is defined to be $\cM^\fs \defeq \cM/\cM^\infty$.
We can also construct the spectral variety $\cZ \hookrightarrow S\times\AA^1$ as follows.
For any affinoid open $U = \Spa (B,B^+)$ for which there is a slope decomposition $\cM(U) = \cM(U)^{\leqslant h} \oplus \cM(U)^{>h}$, define a map $B[X] \rightarrow \End_B(\cM(U)^{\leqslant h})$ by sending $X$ to $T^{-1}$. Let $I_h$ be the kernel of the map. 
Note that $I_h$ is generated by $Q$ guaranteed by (iii) in Definition~\ref{def:hslopedecomp}.
Let $\cZ_{U,h} \hookrightarrow S\times \AA^1$ be $\Spa (B[X]/I_h, (B[X]/I_h)^+)$, where $(B[X]/I_h)^+$ is the integral closure of $B^+$ in $B[X]/I_h$.
If $U'\subseteq U$ is an open affinoid, the flatness of $\cO_S(U) \rightarrow \cO_S(U')$ implies that $\cZ_{U,h}\times_{U} U' = \cZ_{U',h}$. If $h'\leqslant h$ and $\cM(U)$ also has an $h'$-slope decomposition, then $\cZ_{U,h} \hookrightarrow \cZ_{U,h'}$ is a union of connected components because $\cM(U) = \cM(U)^{\leqslant h} \oplus (\cM(U)^{\leqslant h'} \cap \cM(U)^{>h}) \oplus \cM(U)^{>h'}$.
Then $\cZ \defeq \coprod_{U,h}
 \cZ_{U,h}/\sim$, where $\sim$ is the equivalence relation identifying the domain with the image for the injections $\cZ_{U',h}\hookrightarrow \cZ_{U,h}$ and $\cZ_{U,h} \hookrightarrow \cZ_{U,h'}$. 
The projection $\cZ \rightarrow S$ is locally quasi-finite and partially proper.
There is a coherent sheaf $\cM_\cZ^{\fs}$, contained in the pullback of $\cM$ to $\cZ$, defined by $\cM_\cZ^{\fs}(\cZ_{U,h}) = \cM(U)^{\leqslant h}$.
The pushforward of $\cM_\cZ^{\fs}$ to $S$ is $\cM^{\fs}$, the finite slope part of $\cM$.

\begin{proposition}[{\cite[Prop.~6.1.11]{BP}}]
\label{prop:slope_decomp}
Let $M^\bullet$ be a bounded complex of projective Banach $A$-modules, and $\cM^\bullet$ the associated complex of Banach sheaves on $S$.
Let $T$ be a compact operator on $M^\bullet$ considered as an object in the derived category. Let $\rH^i(\cM^\bullet)$ be the $i$-th cohomology sheaf.
Then the sheaves $\rH^i(\cM^\bullet)$ have slope decomposition for $T$. Moreover, there is a complex $\cM^{\bullet,\fs}$ and a morphism $\cM^\bullet\rightarrow \cM^{\bullet, \fs}$ (unique up to a non unique quasi-isomorphism) such that $\rH^i(\cM^{\bullet,\fs}) = \rH^i(\cM^\bullet)^{\fs}$.
\end{proposition}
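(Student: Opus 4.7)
The plan is to reduce to classical Serre--Coleman spectral theory, applied degreewise, and then observe that the resulting decompositions descend to cohomology. First I would represent $T$ by an honest chain map $\tilde T\colon M^\bullet\to M^\bullet$: because $M^\bullet$ is bounded and each $M^i$ is projective, every endomorphism in the derived category lifts to such a $\tilde T$, unique up to homotopy, and the compactness hypothesis can be arranged to hold in each degree. This reduces matters pointwise to a compact operator on a single projective Banach $A$-module.

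Next I would apply Serre's Fredholm determinant theory. For each $i$ the entire series $P_i(X)=\det(1-X\tilde T^i\mid M^i)\in A\{\{X\}\}$ is well defined. Given $x\in S$ and $h\in\QQ$, a standard Riesz-type argument (as in Buzzard's eigenvariety machine) produces, after shrinking to an affinoid neighborhood $U$ of $x$, a rational $h'\geqslant h$ and factorisations $P_i=Q_i\cdot R_i$ with $Q_i$ unitary of slope $\leqslant h'$ and $R_i$ having no zeros of slope $\leqslant h'$. Because $M^\bullet$ is bounded, a single $h'$ can be chosen to work in all nonzero degrees simultaneously, yielding $\tilde T$-stable splittings $M^i|_U=(M^i|_U)^{\leqslant h'}\directsum (M^i|_U)^{>h'}$ with finite projective first summand, characterised intrinsically as the generalised eigenspace cut out by $Q_i^\ast(\tilde T)$.

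Since $\tilde T$ commutes with the differentials and the slope-$\leqslant h'$ parts are intrinsically characterised, each $d^i$ preserves the splittings, giving a decomposition $\cM^\bullet|_U=\cM^{\bullet,\leqslant h'}|_U\directsum \cM^{\bullet,>h'}|_U$ of complexes. Passing to cohomology and using exactness of direct sums yields the required slope decomposition of each $\rH^i(\cM^\bullet)$ over $U$, the slope-$\leqslant h'$ part being coherent as the cohomology of a bounded complex of finite projective $\cO_S(U)$-modules, and all axioms of Definition~\ref{def:hslopedecomp} are then immediate from the construction of $Q_i$. Gluing these local splittings along nested $h'$ and discarding the infinite-slope part produces the global $\cM^{\bullet,\fs}$ together with the canonical morphism $\cM^\bullet\to\cM^{\bullet,\fs}$.

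The main obstacle I expect is that $T$ is only a derived endomorphism: the finite-slope complex $\cM^{\bullet,\fs}$ is therefore well defined only up to non-unique quasi-isomorphism, as the statement explicitly allows, so the gluing must be organised so that the chain-level ambiguity coming from distinct lifts $\tilde T$ is absorbed into homotopies at each stage. A secondary technical point is producing a uniform slope bound across degrees and verifying that the local decompositions are compatible as $h'$ varies or as $U$ shrinks; both rely on standard properties of the Fredholm determinant together with the boundedness of $M^\bullet$.
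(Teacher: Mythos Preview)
The paper does not give its own proof; the proposition is quoted from \cite[Prop.~6.1.11]{BP} and used as a black box. Your outline is essentially the argument found there, in the Ash--Stevens/Urban tradition: represent $T$ by a degreewise compact chain map, run Riesz--Serre theory on each $M^i$, note that the resulting idempotents commute with the differentials, and pass to cohomology.

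One point to tighten: you write that ``the compactness hypothesis can be arranged to hold in each degree'' as though this followed from projectivity of the $M^i$, but it is really part of the definition. As the paper recalls just before the proposition, a morphism of complexes is called compact when it is compact in each degree, and a compact endomorphism in the derived category is one represented by such a chain map; so a degreewise compact lift $\tilde T$ exists from the outset and you should simply take it rather than attempt to produce one. Once that is granted, the individual Fredholm determinants $P_i$ and hence the degreewise splittings genuinely depend on the choice of $\tilde T$, but the induced slope decomposition on $\rH^i$ does not, since homotopic chain maps agree on cohomology. This is exactly the source of the ``unique up to non-unique quasi-isomorphism'' for $\cM^{\bullet,\fs}$ that you correctly anticipate; the gluing is organised over the spectral variety for the product Fredholm series $\prod_i P_i$, and different lifts yield different but quasi-isomorphic models.
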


 Recall the \v{C}ech complex $\cC_{\overline{I},G}^{\bullet}(p^{\underline{n}}, (\nu_\cU,w_\cU))$ which by Lemma~\ref{lem:projective} is a bounded complex of projective Banach modules.
As  $\left\{\pi_{\HT, K_0(p^{\underline{n}})}^{\tor,-1}(]\overline{S}_{\overline{I}}[_m K_0(p^{\underline{n}}))\right\}_m$ and $\left\{\cX_0^\tor(\frakc,p^{\underline{n}},\underline{v})\right\}_{\underline{v}}$, where $\underline{v}\in(\QQ\cap[0,1])^{\Sigma_p}$ is such that $v_\frakq =1$ for all $\frakq\in \overline{I}$, 
   are cofinal, exactly as in the proof of  Lemma~\ref{lem:Upcompact} there exists $m'>m$ such that the $U_I$-action on the  $\cC_{\overline{I},G}^{\bullet}(p^{\underline{n}}, (\nu_\cU,w_\cU))$ seen as object in the derived category 
factors through the restriction maps 
\[\pi^{\tor,-1}_{\HT, K_0(p^{\underline{n}})}(]\overline{C}_{\overline{J}}[_{\overline{I},\ell,m}K_0(p^{\underline{n}}))
\to \pi^{\tor,-1}_{\HT, K_0(p^{\underline{n}})}(]\overline{C}_{\overline{J}}[_{\overline{I},\ell+1,m'}K_0(p^{\underline{n}}))
\] 
which are compact. Hence by  Proposition~\ref{prop:slope_decomp}, $\rH^i(\RG_G^{\overline{I}}(p^{\underline{n}}, (\nu_\cU,w_\cU))) \isom \rH^i(\cC_{\overline{I},G}^{\bullet}(p^{\underline{n}}, (\nu_\cU,w_\cU)))$ admits a  slope decomposition with respect to $U_I$. 

Despite the fact that Lemma~\ref{lem:Upcompact} ensures that $U_I$ is a compact operator on $\RG_G^{\overline{I}}(p^{\underline{n}},(\nu_\cU,w_\cU))$, we do need the Boxer--Pilloni strengthening of the Ash--Stevens slope decomposition theorem, because the terms of that complex are not {\it a priori} projective.

\subsection{The partial eigenvariety}
Let $A$ be an affinoid integral domain.
We say $A$ is \emph{relatively factorial} if for any $f=\sum_{n=0}^\infty a_n X^n\in A\langle X\rangle$ with $a_0=1$, $(f)$ factors uniquely as a product of principal prime ideals $(f_i)$ where each $f_i$ may be chosen with constant term $1$.
A rigid analytic space $\sW$ is relatively factorial if it has an admissible covering by relatively factorial affinoids.

A \emph{Fredholm series} is a global section $f\in \cO(\sW\times \AA^1)$ such that under the map 
\[ \cO(\sW\times \AA^1)\xrightarrow{j^\ast} \cO(\sW)\]
induced by $j\colon \sW\times \{0\} \rightarrow \sW\times \AA^1$ we have $j^\ast f=1$.
A \emph{Fredholm hypersurface} is a closed immersion $\cZ\subseteq \sW\times \AA^1$ such that the ideal sheaf of $\cZ$ is generated by a Fredholm series $f$, in which case we write $\cZ = \cZ(f)$.

\begin{definition}[{\cite[Def.~4.2.1]{Hansen}}]
An \emph{eigenvariety datum} is a tuple $\cD=(\sW, \cZ, \cM, \cT, \psi)$ where
\begin{itemize}
 \item $\sW$ is a separated, reduced, equidimensional, relatively factorial rigid analytic space,
 \item $\cZ\subseteq \sW\times \AA^1$ is a Fredholm hypersurface,
 \item $\cM$ is a coherent analytic sheaf on $\cZ$,
 \item $\cT$ is a commutative $\QQ_p$-algebra, and \
 \item $\psi\colon \cT\rightarrow \End_{\cO_\cZ}(\cM)$ is a $\QQ_p$-algebra homomorphism.
\end{itemize}
\end{definition}

\begin{theorem}[{\cite[Thm.~4.2.2]{Hansen}}]
\label{thm:eigenvariety_machine}
Given an eigenvariety datum $\cD$, there exists a separated rigid analytic space $\cE = \cE(\cD)$ together with a finite morphism $\pi\colon \cE \rightarrow \cZ$, a morphism $w\colon \cE \rightarrow \sW$, an algebra homomorphism $\cT \rightarrow \cO(\cE)$
 and a coherent sheaf $\cM^\dagger$ on $\cE$ together with a canonical isomorphism $\cM \isom \pi_\ast \cM^\dagger$ compatible with the $\cT$-actions on $\cM$ and $\cM^\dagger$.
 The points of $\cE$ lying over $z\in\cZ$ are in bijection with the generalized eigenspaces for the action of $\cT$ on $\cM(z)$.
\end{theorem}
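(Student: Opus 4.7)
The plan is to follow Buzzard's eigenvariety machine as adapted by Coleman--Mazur, Chenevier and Hansen to the Fredholm-hypersurface setup. The key idea is to cover $\cZ$ by ``slope-adapted'' affinoid opens on which $\cM$ restricts to a finite projective module, construct a local eigenvariety on each piece as a relative spectrum of the commutative Hecke image, and then glue. For the cover, I would invoke the theorem of Coleman, refined by Buzzard, that for a Fredholm hypersurface $\cZ = \cZ(f)\subseteq \sW\times \AA^1$ over a relatively factorial base $\sW$, there is an admissible affinoid cover whose members are of the form $\cZ_i = \cZ\cap (\cU_i\times \DD_i)$ with $\cU_i\subseteq \sW$ an open affinoid and $\DD_i\subseteq \AA^1$ a closed disk, such that $f$ factors on $\cU_i$ as $f = Q_i\cdot R_i$ with $Q_i$ a monic polynomial of bounded-slope zeros and $R_i$ nowhere vanishing on $\cZ_i$. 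Built into the construction of $\cZ$ via Riesz theory for compact operators, the restriction $\cM|_{\cZ_i}$ then corresponds to a finite projective $B_i$-module $M_i$, where $B_i = \cO_\cZ(\cZ_i)$.

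For the local construction on $\cZ_i = \Spa(B_i, B_i^+)$, the $\QQ_p$-algebra map $\psi$ induces a $B_i$-algebra morphism $\cT\otimes_{\QQ_p} B_i\to \End_{B_i}(M_i)$, whose image $T_i$ is a commutative $B_i$-subalgebra. Because $M_i$ is finite projective, Cayley--Hamilton implies that each $t\in \cT$ acts satisfying its characteristic polynomial, so $T_i$ is a finite commutative $B_i$-algebra. I would then set $\cE_i := \Spa(T_i, T_i^+)$, where $T_i^+$ is the integral closure of the image of $B_i^+$; the structure map $\pi_i\colon \cE_i\to \cZ_i$ is finite, and $M_i$ acquires the structure of a coherent $T_i$-module $\cM_i^\dagger$ on $\cE_i$ with $\pi_{i,\ast}\cM_i^\dagger = \cM|_{\cZ_i}$.

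To glue, I would observe that on any overlap $\cZ_{ij} := \cZ_i\cap \cZ_j$ the two restrictions of $\cM$ coincide canonically, so the Hecke images base-change to the same commutative subalgebra of $\End_{B_{ij}}(\cM(\cZ_{ij}))$. This gives canonical isomorphisms between $\cE_i|_{\cZ_{ij}}$ and $\cE_j|_{\cZ_{ij}}$ compatible with the coherent sheaves, and the local data glue to a separated rigid space $\cE$ equipped with a finite morphism $\pi\colon \cE\to \cZ$ and a coherent sheaf $\cM^\dagger$ satisfying $\pi_\ast \cM^\dagger \isom \cM$. The weight map is then the composition $\cE\to \cZ\to \sW$, and the $\cT$-action on $\cM^\dagger$ yields the required algebra homomorphism $\cT\to \cO(\cE)$. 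The description of fibers is immediate from the local picture: for $z\in \cZ$ with residue field $k(z)$, the fiber $\pi^{-1}(z)$ is $\Spa$ of the finite $k(z)$-algebra $T_i\otimes_{B_i} k(z)$, whose maximal ideals correspond bijectively to the generalized $\cT$-eigensystems appearing in $\cM(z)$.

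The main obstacle, and really the substance of the theorem, is the first step: producing the slope-adapted covering together with the projectivity of $M_i$ on each slice. This is precisely where one invokes the Boxer--Pilloni slope decomposition of Proposition~\ref{prop:slope_decomp}: the spectral variety $\cZ$ attached to the finite-slope projection of the compact operator carries, locally over $\sW$, a direct-summand decomposition by slope, and the coherent sheaf $\cM$ inherits a projective-module structure on slope-adapted pieces. Once this input is available, the Cayley--Hamilton argument, the relative $\Spa$ construction and the descent/gluing are formal and rely only on standard rigid-analytic finiteness.
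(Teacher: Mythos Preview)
The paper does not give its own proof of this theorem: it is quoted verbatim from Hansen \cite[Thm.~4.2.2]{Hansen} and used as a black box. So there is no ``paper's own proof'' to compare against. Your sketch is, in outline, exactly the standard construction that Hansen (following Coleman--Mazur, Buzzard, Chenevier) carries out: cover $\cZ$ by slope-adapted affinoids $\cZ_i$ finite over opens of $\sW$, form on each the finite $B_i$-algebra $T_i = \im(\cT\otimes B_i \to \End_{B_i}(M_i))$, set $\cE_i = \Spa(T_i,T_i^+)$, and glue.

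That said, your final paragraph misidentifies the substantive input. In the hypotheses of the theorem, $\cZ$ is already given as a Fredholm hypersurface and $\cM$ is already a \emph{coherent} sheaf on $\cZ$. The existence of a slope-adapted admissible affinoid cover of $\cZ$ by pieces finite flat over affinoid opens of $\sW$ is a structural fact about Fredholm hypersurfaces over relatively factorial bases (this is the Coleman--Buzzard result you allude to at the start); it does not require Proposition~\ref{prop:slope_decomp}. Moreover you do not need $M_i$ to be projective over $B_i$: coherence of $\cM$ gives that $M_i$ is a finite $B_i$-module, and for any finite module over a Noetherian ring the endomorphism algebra is again finite, so $T_i$ is a finite $B_i$-algebra and the relative $\Spa$ construction goes through. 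The Boxer--Pilloni Proposition~\ref{prop:slope_decomp} is used in this paper \emph{upstream} of the present theorem --- to manufacture the eigenvariety datum $(\sW,\cZ,\cM,\cT,\psi)$ from the \v{C}ech complex $\cC_{\overline{I},G}^\bullet$ --- not to run the eigenvariety machine itself. Once the datum is in hand, Hansen's theorem is the purely formal gluing you describe in your first three paragraphs.
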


We now construct the eigenvarieties from the cuspidal partially classical coherent cohomologies of Hilbert modular varieties.
Let $I\subseteq \Sigma_p$.
Let $\kappa_{\overline{I}}$ be a locally algebraic character on $\prod_{\frakq\in \overline{I}} \TT_{\frakq}(\ZZ_p)$.
To get an eigenvariety datum, we make the following choices:
\begin{itemize}
    \item $\sW = \cW_I^{G, \kappa_{\overline{I}}}$.
    \item $\cZ$ is the spectral variety associated to $\bigoplus_i \rH^i(\cC_{\overline{I},G}^{\bullet}(p^{\underline{n}}, (\nu_\cU,w_\cU)))$, which by construction is a Fredholm hypersurface.
    \item  $\cM$ is the finite slope part associated to $\bigoplus_i \rH^i(\cC_{\overline{I},G}^{\bullet}(p^{\underline{n}}, (\nu_\cU,w_\cU)))$.
    \item $\cT$ is the commutative $\cO_\cZ$-algebra generated by the operators $T_\frakl$ for $\frakl\nmid \frakn p$ and $U_\frakp$ for $\frakp\in I$.
    \item $\psi\colon\cT \rightarrow \End_{\cO_\cZ}(\cM)$ is the natural Hecke action.
\end{itemize}

\begin{theorem} \label{thm:main}
There is a separated rigid space $\cE_{I}^{\kappa_{\overline{I}}}$, called the $\overline{I}$-classical eigenvariety, equipped with the weight map $w\colon \cE_{I}^{\kappa_{\overline{I}}} \rightarrow \cW_{I}^{G,\kappa_{\overline{I}}}$ which is locally finite, and an algebra homomorphism $\psi\colon \cT \rightarrow \cO(\cE_{I}^{\kappa_{\overline{I}}} )$.
The points of $\cE_{I}^{\kappa_{\overline{I}}}$ are in bijection with the finite-slope generalized Hecke eigenspaces of $\RG_G^{\overline{I}}(p^{\underline{n}},(\nu_\cU,w_\cU))$.
\end{theorem}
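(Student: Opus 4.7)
The proof will essentially amount to verifying that the quintuple listed just before the statement satisfies the hypotheses of Hansen's eigenvariety machine (Theorem~\ref{thm:eigenvariety_machine}), and then identifying the resulting object. The plan is therefore to proceed in three steps: construct a globally defined Fredholm series encoding the $U_I$-action; invoke the slope decomposition to produce the coherent sheaf $\cM$; and then quote Theorem~\ref{thm:eigenvariety_machine}.

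\emph{Step 1 (formal properties of the weight space).} I would first observe that $\cW^G$, being the rigid generic fibre of $\Spf(\cO\llbracket\TT(\ZZ_p)\times\ZZ_p^\times\rrbracket)$, is a finite disjoint union of open polydiscs over a finite étale base, hence is separated, reduced, equidimensional and admits a covering by relatively factorial affinoids (Tate algebras $L\langle X_1,\ldots,X_d\rangle$ are factorial). The closed subspace $\cW_I^{G,\kappa_{\overline{I}}}$ inherits all of these properties since it is cut out by locally algebraic conditions in the $\overline{I}$-coordinates. This supplies the first bullet of the eigenvariety datum.

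\emph{Step 2 (Fredholm series and coherent sheaf).} Cover $\cW_I^{G,\kappa_{\overline{I}}}$ by affinoids $\cU$ on which the universal character $(\nu_\cU,w_\cU)$ is $w$-analytic for some $w$; then for sufficiently large $m$ the \v{C}ech complex $\cC_{\overline{I},G}^{\bullet}(p^{\underline{n}},(\nu_\cU,w_\cU),m)$ is a bounded complex of projective Banach $\cO(\cU)$-modules by Lemma~\ref{lem:projective}, and $U_I$ acts compactly on it, as object of the derived category, by Lemma~\ref{lem:Upcompact} combined with the fact (already established in the preceding paragraph) that its action factors through the restriction maps between nested coverings $\cY^{\tor}_{\overline{J},\ell,m}\rightsquigarrow\cY^{\tor}_{\overline{J},\ell+1,m'}$. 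On each such $\cU$ the Fredholm determinant
\[
F_\cU(X)\defeq \det\bigl(1-X\cdot U_I \mid \cC_{\overline{I},G}^{\bullet}(p^{\underline{n}},(\nu_\cU,w_\cU))\bigr)\in \cO(\cU)\langle X\rangle
\]
is well defined via the alternating product of Fredholm determinants in each degree, and is independent of the auxiliary choices of $m$, $\ell$ and of the projective resolution up to the standard invariance of Fredholm determinants under quasi-isomorphism. The local pieces $F_\cU$ glue to a global Fredholm series $F\in\cO(\sW\times\AA^1)$, cutting out a Fredholm hypersurface $\cZ\subseteq\sW\times\AA^1$. Proposition~\ref{prop:slope_decomp} then furnishes, for each $\cU$ and each slope $h$, a finite projective $\cO(\cU)$-module $\bigoplus_i\rH^i(\cC_{\overline{I},G}^{\bullet}(p^{\underline{n}},(\nu_\cU,w_\cU)))^{\leqslant h}$ which glues along the spectral variety construction into the coherent sheaf $\cM$ on $\cZ$. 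The Hecke algebra $\cT$ generated by the $T_\frakl$ for $\frakl\nmid \frakn p$ acts $\cO_\cZ$-linearly on $\cM$ since all $T_\frakl$ commute with $U_I$.

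\emph{Step 3 (applying the machine and identifying points).} With the datum $(\sW,\cZ,\cM,\cT,\psi)$ in hand, Theorem~\ref{thm:eigenvariety_machine} produces a separated rigid space $\cE_I^{\kappa_{\overline{I}}}$, a finite morphism $\pi\colon\cE_I^{\kappa_{\overline{I}}}\to\cZ$, hence a locally finite weight map to $\sW$, a $\QQ_p$-algebra map $\cT\to\cO(\cE_I^{\kappa_{\overline{I}}})$, and a coherent sheaf $\cM^\dagger$ with $\pi_*\cM^\dagger\simeq\cM$. The bijection between points of $\cE_I^{\kappa_{\overline{I}}}$ above a weight $(\nu,w)$ and finite-slope generalized systems of Hecke eigenvalues in $\RG_G^{\overline{I}}(p^{\underline{n}},(\nu,w))$ follows by combining the bijection on each $\cM(z)$ given by Theorem~\ref{thm:eigenvariety_machine} with the Hecke-equivariant quasi-isomorphism $\RG_G^{\overline{I}}(p^{\underline{n}},(\nu_\cU,w_\cU))\simeq \cC_{\overline{I},G}^{\bullet}(p^{\underline{n}},(\nu_\cU,w_\cU))$ already established, together with the base-change property of Proposition~\ref{prop:sheafbasechange} to pass from families to the fibre $(\nu,w)$.

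\emph{Main obstacle.} The genuine mathematical work has been done in the previous sections: the compactness of $U_I$ (Lemma~\ref{lem:Upcompact}), the projectivity of the \v{C}ech terms (Lemma~\ref{lem:projective}), and the Boxer--Pilloni slope decomposition (Proposition~\ref{prop:slope_decomp}) for complexes that are not fibrewise projective. The only subtle verification remaining is that the local Fredholm determinants $F_\cU$ are independent of the auxiliary choices (different $m$, different admissible affinoid coverings of $\cX_0^\tor(\frakc,p^{\underline{n}},\underline{v})$) and hence glue to a global Fredholm series on $\sW\times\AA^1$; this is where Hecke-equivariance of the quasi-isomorphisms between the various \v{C}ech models and the invariance of Fredholm determinants under compact quasi-isomorphisms are both used. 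Once this gluing is in place, the theorem follows by invoking Theorem~\ref{thm:eigenvariety_machine} as a black box.
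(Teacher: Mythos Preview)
Your proposal is correct and follows the same approach as the paper: the paper gives no separate proof of Theorem~\ref{thm:main}, treating it as an immediate consequence of applying Hansen's machine (Theorem~\ref{thm:eigenvariety_machine}) to the eigenvariety datum displayed just before the statement, all of whose ingredients have been constructed in the preceding subsections. Your Steps~1--3 simply make explicit the verifications the paper leaves implicit; the one cosmetic difference is that the paper obtains $\cZ$ directly as the spectral variety arising from the slope decomposition (the construction preceding Proposition~\ref{prop:slope_decomp}), rather than by gluing local Fredholm determinants as you do, so the ``main obstacle'' you flag is in fact bypassed by the paper's formulation.
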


For $I=\Sigma_p$, the partial eigenvariety $\cE_{\Sigma_p}$ is the cuspidal eigenvariety in \cite{AIP16}.

\begin{remark} \label{rmk:nonsurj}
After the construction of partial eigenvarieties, it is desirable to have functorialities between them for different $I\subseteq \Sigma_p$, such as
\[
\cE_I^{\kappa_{\overline{I}},I-\mathrm{fs}} \hookrightarrow 
 (\iota^G_{\kappa_{\overline{I}}})^\ast\cE_{\Sigma_p}
\]
where $\iota^G_{\kappa_{\overline{I}}}$ is the inclusion of weight spaces $\cW_{I}^{G,\kappa_{\overline{I}}} \hookrightarrow \cW^G$.
Even though we have natural maps between complexes of partially classical Hilbert modular forms by equation~\eqref{eq:functoriality}, it does not automatically provide functoriality between the partial eigenvarieties.
This is because the eigenvarieties are constructed by maximal dimension families, and it is not clear whether the specialization map of partially classical Hilbert modular families to lower dimensional families is surjective. Indeed, the corresponding result for overconvergent Hilbert modular forms makes crucial use of the fact that the ordinary locus in the minimal compactification $\cX^\ast$ is affine. In the finite slope case,  one possible approach would be to generalize the partial classicality theorem \cite{Hsu_partial} from classical weights to {\it partially} classical weights, which would  allow a comparison of the (finite slope part of the) partial eigenvariety to the full eigenvariety whose geometry is better understood. 
\end{remark}

\bigskip 
\subsection*{Acknowledgements} The research leading to this article was partially supported  by the Agence Nationale de Recherche grant ANR-18-CE40-0029. We would also like to thank the American Institute of Mathematics where part of this research was conducted. We are indebted to Adel Betina for numerous discussions regarding the geometric construction of eigenvarieties and for pointing us to a gap in an earlier version of this article related to the non-affineness of the partial ordinary locus, 
and to David Loeffler who generously shared his ideas regarding
affine coverings coming via the Hodge--Tate map which eventually filled the gap. We also heartily thank the anonymous referee for a detailed report.

\bibliographystyle{siam}

\end{document}